\newcommand{\bbE}{\mathbb{E}}
\newcommand{\bbP}{\mathbb{P}}
\newcommand{\bbR}{\mathbb{R}}
\newcommand{\bbN}{\mathbb{N}}
\newcommand{\calF}{\mathcal{F}}
\DeclareMathOperator{\PA}{\text{PA}}
\DeclareMathOperator{\Deg}{\text{Deg}}
\DeclareMathOperator{\argmax}{arg\,max}
\DeclareMathOperator{\Var}{Var}
\newcommand{\lrbkt}[1]{\{#1\}}
\newtheorem{theorem}{Theorem}
\newtheorem{lemma}{Lemma}
\newtheorem{proposition}{Proposition}
                                           \def\NN{\mathbb{N}}
 				\def\RR{\mathbb{R}}
\def\ra{\rightarrow}
\def\dotfil{\leaders\hbox to 1em{\hss.\hss}\hfill}
\def\hexnumber#1{\ifcase#1 0\or 1\or 2\or 3\or 4\or 5\or 6\or 7\or 8\or 9\or A\or B\or C\or D\or E\or F\fi}
\def\BL{{\rm B\kern -0.5pt L}}
\def\UC{{\rm U\kern-0.5pt C}}
\def\argmax{\mathop{\rm argmax}}
\def\tdot#1{\kern1.2pt\dot{\vphantom{#1}}\kern-1.2pt
        \dot#1\kern0.8pt\dot{\vphantom{#1}}\kern-0.8pt}
\let\Covar\Cov
\mathchardef\given="626A
\def\generalweak#1{\ {\mathchoice{\buildrel #1\over \rightsquigarrow}%
{\raise-2pt\hbox{$\buildrel #1\over \rightsquigarrow$}}{}{}}\ }
\def\generalprob#1{\ {\mathchoice{\raise-1.5pt\hbox{$\buildrel#1\over\ra$}}
{\raise-2pt\hbox{$\buildrel#1\over\ra$}}{}{}}\ }
\def\Pgg{\ {\mathchoice{\raise-1.5pt\hbox{$\buildrel {\small P}\over\gg$}}
{\raise-2pt\hbox{$\buildrel {\small P}\over\gg$}}{}{}}\ }
\def\boxit#1{\vbox{\hrule\hbox{\vrule\kern1pt\vbox{\kern1pt #1\kern1pt}\kern1pt\vrule}\hrule}}
\def\eet#1{}
\journal{Stochastic Processes and their Applications}
\begin{document} 
\begin{frontmatter}
\title{On the Asymptotic Normality of Estimating the Affine Preferential Attachment Network Models with Random Initial Degrees}

\author[fudan,scms]{Fengnan~Gao\corref{corfn}\fnref{fngao}}
\ead{fngao@fudan.edu.cn}

\author[leiden]{Aad~van der Vaart\fnref{fnaad}}
\ead{avdvaart@math.leidenuniv.nl}

\fntext[fngao]{Research supported by the Netherlands Organization for Scientific Research and by NNSF of China Grant 1169010013.}
\fntext[fnaad]{The research leading to these results has received funding from the European Research Council under ERC Grant Agreement 320637.}
\cortext[corfn]{Corresponding author}

\address[fudan]{School of Data Science, Fudan University, Handan Road 220, Shanghai 200433, China}
\address[scms]{Shanghai Center for Mathematical Sciences, Handan Road 220, Shanghai 200433, China}
\address[leiden]{Mathematical Institute, Leiden University, P.O. Box 9512, 2300 RA Leiden, 
The Netherlands}
\begin{abstract}
We consider the estimation of the affine parameter and power-law exponent in the preferential attachment model 
with random initial degrees.  
We derive the likelihood, and show that the maximum likelihood estimator (MLE) is asymptotically normal and efficient. 
We also propose a quasi-maximum-likelihood estimator (QMLE) to overcome the MLE's dependence on the history of the initial degrees.  
To demonstrate the power of our idea, we present numerical simulations.  
\end{abstract}
\begin{keyword}
Preferential Attachment Model\sep Complex Networks\sep Statistical Inference \sep Asymptotic Normality
\MSC[2010] 62M05
\end{keyword}
\end{frontmatter}

\section{Introduction and Notations}
\label{sec:intro}
In the past decade random graphs have become well established for modelling complex
networks.  The \emph{preferential attachment} (PA) model, introduced in
\cite{Barabasi99emergenceScaling}, is popular in studies of social networks, the internet,
collaboration networks, and so on.  The PA model is a \emph{dynamic} model, in that it describes
the evolution of the network through the sequential addition of new nodes, and can explain 
the so-called \textit{scale-free} phenomenon. This is the observation 
that in various real world networks the proportion $p_k$ of nodes of degree $k$ follows
a power law 
\[
    p_k \propto k^{-\tau}, 
\]
for some \emph{power-law exponent} $\tau$.  
For example, Table 3.1 in \cite{newman2003structure} gives comprehensive lists of basic statistics for a number of well-known networks, where the power-law exponent is estimated to be  $2.4$ for protein interactions and $2.5$ for the Internet.  
Another source \cite{faloutsos1999power} estimates the power-law exponent of the Internet to be between $2.15$ and $2.20$. 

The PA model is built upon the simple paradigm  ``the rich get richer'' and offers a possible
scenario in which the Matthew effect (``to all those who have, more will be given'')
takes place (\cite{Perc20140378}).  If the network is modelled as 
a graph, with the vertices representing individuals and the degree of a vertex 
(the number of edges) representing wealth, then this means that a new vertex is more likely to connect to already
well-connected vertices: vertices with higher degrees (``rich'') inspire more
incoming connections (``get richer'').  

In the simplest type of PA model this is implemented as follows.  
We are given a non-decreasing \textit{preferential
  attachment function} $f: \bbN^{+} \rightarrow \mathbb{R}_+$.  The network is initialised at $t=1$ as a graph consisting of two vertices 
with one edge between them.  Then the recursive attachment scheme begins  ($t=2,3,\ldots$).  At time $t$ a new vertex is added to the graph
and is connected to exactly one of the $t$ existing vertices, say $i$, with probability
proportional to $f(d_i)$, where $d_i$ is the degree of vertex $i$ in the graph at time $t-1$.  

The proportionality, which entails normalizing
by the sum of all transformed degrees $f(d)$, makes that an \emph{affine} function $f$ can be parametrized  
without loss of generality by a single parameter $\delta$, in the form
$f(k) = k + \delta$, where minimally $\delta >-1$.  This special case has been well studied. In particular, it has been established (see
e.g.\ \cite{mori2002random,hofstadcomplexnet}) that the \emph{empirical degree distribution} $(p_k(t))_{k=1}^\infty$,
where $p_k(t)$ is the proportion of vertices of degree $k$ in the tree at time $t$, converges to a limiting degree distribution
$(p_k)_{k=1}^\infty $ as $t \rightarrow \infty$, which follows a power law
$$p_k \propto k^{-(3+\delta)}.$$ 
The Barabási–Albert model is the special case that $\delta=0$ and has $p_k = 4/(k(k+1)(k+2))$.


If the limiting degree distribution $(p_k)_{k=1}^\infty$ follows a power law, say
\( p_k = c_k k^{-(3+\delta)}\)
with $c_k$ slowly varying in $k$, then \( \log p_k = \log c_k - (3+\delta) \log k \), where
$\log c_k$ behaves like a constant when $k$ is sufficiently large. This suggests that we might estimate $\delta$ by performing a linear
regression of the logarithm empirical degree $\log p_k(t)$ with respect to $\log k$, with $k$
sufficiently large. Then $\hat \delta := \hat \tau - 3$,  for $\hat{\tau}$ the negative of the slope of the fitted line, should
estimate $\delta$. This has been the method of choice in several applications of PA models; see for example the famous paper 
\cite{Barabasi99emergenceScaling}.  However, this estimator does not work well. One problem is that the equality
\( \log p_k(n) = \log c_k - \tau \log k\)
comes from an asymptotic approximation based on Stirling's formula, and it is unclear when the asymptotics start to kick in.
It is also unclear how to determine the quality of this ad-hoc estimate,  by a standard error or confidence interval, or to perform inference.

In the present paper we remedy this by considering the maximum likelihood estimator of $\delta$.
We show that this can be easily computed as the solution of an equation, prove its asymptotic normality and derive its
standard error. Furthermore, in a simulation study we show that it is quite accurate.

We do this in the following more general context of the \emph{PA model with random initial degrees} as considered in \cite{deijfen2009preferential}. 
In this model the graph also grows by sequentially adding vertices.  However, instead of connecting
a new vertex to a single existing vertex, the vertex at time $t$ comes with $m_t\ge 1$ edges
to connect it to the existing network.  The connections are still made according to the basic preferential attachment rule. 
The special case of this model with each $m_t$ equal to a fixed number $m\in\bbN$ was well studied 
(see \cite{hofstadcomplexnet} for a discussion and  more references).  
For $m=1$ this model reduces to the basic model considered in the preceding paragraphs.
However, for real life applications it is somewhat unnatural and inflexible to assume that every vertex comes with the same number of edges,
let alone a single edge.
In this paper, we allow $m_1,m_2,\ldots$ to be an \textsc{iid} sequence of random variables, only restricted to have
two finite moments. From the point of view of social network modeling the resulting model can be interpreted as follows.
The initial degrees $m_t$ stress the difference of nodes at their times of inclusion (``birth''), while the preferential attachment 
enforces 
the discrimination against the less well-off individuals (or equivalently the positive discrimination against the well-off individuals).  
Thus the PA network with random initial degrees combines the effects of ``rich-by-birth'' and ``rich-get-richer''.   

Generally this model also gives networks with degree distributions of asymptotic power law type,
but the exponent depends on both the attachment function and the initial degree distribution.
Suppose that the preferential attachment function is $f(k) = k + \delta$ as before, and
denote the mean of the  initial degree distribution by $\mu=\bbE m_t$. Then \cite{deijfen2009preferential} 
has shown that the limiting degree distribution $(p_k)$ for the model follows asymptotically (as $k\ra\infty$)
a power law with exponent $\min(3+\delta/\mu, \tau_m)$ if the random initial degree distribution
follows a power law with exponent $\tau_m$ and is equal to $3+\delta/\mu$ if the latter distribution
decays faster than a power law (which is equivalent to setting $\tau_m = \infty$).  

The estimation of a preferential attachment function $f$ is of both theoretical and practical interest.  
Despite the omnipresence of PA networks in modeling real-world networks, 
the literature on statistical estimation of this model is sparse.
Sound understanding of the statistical properties of PA models will help in evaluating the validity of the PA network model in real-world applications.

The paper \cite{empiestimator} proposed an empirical estimator for each $f(k)$ given
a general sub-linear preferential attachment function $f$. Although
this estimator was proved to be consistent as $t\ra\infty$, we may expect better  estimators
if we restrict attention to the domain of affine functions, whose estimation is equivalent to estimating
the single parameter $\delta$. A restriction to affine functions is not unnatural, 
as exactly the affine preferential attachment models correspond to power-law behaviour. Therefore affine $f$ matter
most to the practitioners seeking a natural explanation of power laws.  The paper
\cite{2015arXiv150407328R} solves some statistical problems in the affine model, but focuses on the estimation of the individual
degree distribution, and does not directly consider estimating the key affine
parameter $\delta$.  
The present paper provides a ready-to-use solution for estimating the affine parameter, and sound  theoretical insight into the statistical modeling of PA networks.

The estimation problem of the affine parameter $\delta$ is not only interesting in itself, but also important because of its direct connection to estimating the power-law exponent. Although power laws are ubiquitous, as pointed out above, estimating their components can be difficult (\cite{doi:10.1137/070710111}).  
One must deal with two types of asymptotics here: the limiting degree distribution when the number of vertices goes to infinity and the limiting power-law behavior as the degree goes to infinity in the limiting degree distribution.
It is hard to determine when these asymptotics kick in and hence are usable for an estimation procedure.
For instance, \cite{doi:10.1137/070710111} proposed an estimator for the power-law exponent but the estimator needs an estimate of the minimal degree where the power law starts to hold and such an estimate typically requires careful empirical analysis.
Furthermore, power-law relations refer mostly to nodes of high degrees, of which there are only few, resulting in high variance 
in estimation (and thus low credibility). The maximum likelihood method
automatically weighs the information contained in the different degrees, and does so in 
an optimal way, as we show below. Of course, a drawback is that
it assumes that the PA mechanism is a good fit to the empirical network.
 
The main challenge in analyzing the maximum likelihood estimator of $\delta$ is that the model is 
a non-stationary Markov chain, which continuously visits new states, given by the growing network.
This requires careful analysis, but in the end we find that the quantities that are relevant for the estimation of
$\delta$ stabilize, as the network grows, leading to the result that the MLE
for $\delta$ based on observing the history of the network until time 
$n$ tends to a normal distribution of minimal variance when centered at the true $\delta$ and scaled by $\sqrt n$.
  
In practice it may well be that only a  snapshot of the network at time $n$ is observed, and not
its history through times $t=1,\ldots, n$.  A discovery that is surprising 
at first is that given fixed initial degrees $m_t=m$ this makes no difference for the
maximum likelihood procedure. It turns out that in this case,
which we refer to as the \textit{preferential attachment network with fixed initial degrees},  the snapshot at time $n$ is 
statistically sufficient for the full history. As a consequence the MLE
based on the snapshot is asymptotically normal after centering at $\delta$ and scaling by $\sqrt n$ in this case.

On the other hand, random initial degrees $m_t$ significantly complicate estimation when
observing only a snapshot of the network. Performing maximum likelihood 
would require either marginalizing the likelihood over all possible histories or implementing an iterative
approximation, e.g.\ of EM type. This seems computationally daunting. We also do not present
a full theoretical analysis. In fact, we conjecture that if the initial-degree 
distribution were also unknown no estimator for $\delta$ based only 
on the degrees in the network at time $n$ can attain a $\sqrt n$-rate of 
estimation, thus suggesting that the statistics change significantly. On the positive
side, we propose a ``quasi maximum likelihood'' estimator of $\delta$ that requires to know the initial degree
distribution but relies only on the final snapshot. We show this estimator to attain a $\sqrt n$-rate and to be asymptotically
normal, with a somewhat larger  variance than the MLE based on the complete evolution of the network.

The paper is organized as follows.  In Section~\ref{SectionMLE} we introduce basic notation and
derive the likelihood and the maximum likelihood estimator. In Section~\ref{sec:consistency} we
prove the consistency of this estimator.  In Section~\ref{SectionAN} we derive the main
result of the paper, which is the asymptotic normality of the maximum likelihood estimator,
by an application of the martingale central limit theorem.  Section~\ref{sec:fixed-initial-degree} gives a special case of the general results for fixed initial degrees.
Section~\ref{sec:quasi-mle} defines the quasi-maximum-likelihood estimator $\tilde \delta_n$, which does not depend on the history of the network, and establishes its asymptotics. 
Last but not the least, we present simulations in Section~\ref{sec-numerical} to illustrate the results.  

\section{Construction of the MLE}
\label{SectionMLE}
We start by introducing the affine preferential attachment model with random initial degree distribution.  Here we adapt the notation from
\cite{hofstadcomplexnet,deijfen2009preferential}.  Let $(m_t)_{t\ge 1}$ be an independent and
identically distributed (\textsc{iid}) sequence of positive integer-valued random variables.
The model produces a network sequence $\{ \PA_t(\delta)\}_{t=1}^\infty$, where for every $t$
the network $\PA_t(\delta)$ has a set $V_t = \{ v_0,v_1,\dots, v_{t} \}$ of $t+1$ vertices and $\sum_{i=1}^t m_i$ edges. 
The first network $\PA_1(\delta)$ consists of two
vertices $v_0$ and $v_1$ with $m_1$ edges between them.  For $t \ge 2$, given $\PA_{t-1}(\delta)$, a
new vertex $v_t$ is added with $m_t$ edges connecting $v_t$ to $V_{t-1}$, determined by the \textit{intermediate
  updating} preferential attachment rule.  This updating rule means that the edges are added sequentially using the
preferential attachment rule.  Define $\PA_{t,0}(\delta)= \PA_{t-1}(\delta)$ to be network after $v_{t-1}$ has
been fully integrated, and let $\PA_{t,1}(\delta), \PA_{t,2}(\delta),\ldots, \PA_{t,m_t}(\delta)$ be intermediate networks,
which add $v_t$ and its $m_t$ edges sequentially to $\PA_{t,0}(\delta)$, as follows.  For $1\le i\le m_t$, the network
$\PA_{t,i}(\delta)$ is constructed from $\PA_{t,i-1}(\delta)$ by adding an (additional) edge between $v_t$ 
and a randomly-selected vertex among $\{ v_0, v_1, \dots, v_{t-1} \}$. The probability that this is vertex $v_j$ is
proportional to $k+\delta$, if $v_j\in V_{t-1}$ has degree $k$ in $\PA_{t,i-1}(\delta)$. Here $\delta >-1$
is an unknown parameter.  The random choice is made through a multinomial trial on all the vertices in $V_{t-1}$.  In other words,
the conditional probability that the $i$-th edge of $v_t$ connects it to $v_j$ is
\begin{equation}
  \bbP\big( v_{t,i} \rightarrow v_j | \PA_{t,i-1}(\delta) \big) = \frac{\Deg_{t,i-1}(v_j) + \delta }{\sum_{v\in V_{t-1}} (\Deg_{t,i-1}(v) +  \delta)},
    \label{eqn-pa-evolution}
\end{equation}
where $\Deg_{t,i-1}(v)$ is the degree of $v$ in $\PA_{t,i-1}(\delta)$.  After all $m_t$ edges have been added to $v_t$, 
the network is given by $\PA_{t,m_t}(\delta) = \PA_{t}(\delta) = \PA_{t+1,0}(\delta)$.  

We define \(N_k(t)\) to be the number of vertices of degree $k$ in the network $\PA_t(\delta)$ (counting also $v_t$),
and $N_k(t,i-1)$ to be the number of vertices of degree $k$ in the network $\PA_{t,i-1}(\delta)$,  \emph{not counting} $v_t$ (so
belonging to $V_{t-1}$), for $1 \le i\le m_t$. By convention $N_k(t,0) = N_k(t-1)$.  
We denote by $D_{t,i}$ the degree of the vertex that was chosen when 
constructing $\PA_{t,i}(\delta)$ from $\PA_{t,i-1}(\delta)$.
So we can say that \emph{the $i$-th edge chosen by the vertex $v_t$ possesses degree $D_{t,i}$}.  
For the evolution of the number of vertices of degree \(k\), there are several scenarios.  
For given natural numbers $k$ and $i \le m_t$:
\begin{itemize}
    \item If $D_{t,i} \notin \lrbkt{k, k-1}$, then the number of vertices of degree $k$ remains unchanged, i.e.\ $N_k(t,i) = N_k(t,i-1)$.
    \item If $D_{t,i} = k-1$, then the vertex that was picked by the incoming vertex gets one extra connection and there is one more vertex of degree $k$, i.e.\ $N_k(t,i) = N_k(t,i-1) + 1$.    
    \item If $D_{t,i} = k$, then the vertex that was picked by the incoming vertex becomes a vertex of degree $k+1$ and there is one fewer vertex of degree $k$, i.e.\ $N_k(t,i) = N_k(t,i-1) -1$.  
\end{itemize}
After the last update $i = m_t$, the vertex $v_t$ is fully integrated in the network.
Since its degree is $m_t$, we have $N_k(t) = N_k(t,m_t) + 1_{\lrbkt{ k = m_t}}$.  
This concludes the time step $t$ and the total number of vertices of the network becomes $t+1$. 

These observations are summarised in the following equation for the degree evolution:
\begin{equation}
    N_k(t) = N_k(t-1) + \sum_{i=1}^{m_t} 1_{\lrbkt{ D_{t,i} = k-1}} - \sum_{i=1}^{m_t} 1_{\lrbkt{D_{t,i} = k}} + 1_{\lrbkt{k=m_t}}.
    \label{eqn-Nkt-evo}
\end{equation}
The total number of edges in $\PA_t(\delta)$ is  $M_t := \sum_{j=1}^t m_j$.  Clearly the maximal degree in the network $\PA_t(\delta)$
is bounded by $M_t$, i.e.\ $N_k (t) = 0$ for any $k > M_t$. Furthermore, $\sum_{k=1}^{M_t} N_k(t) = t+1 $ is the total number
of vertices at time $t$.

The conditional probability of the incoming vertex choosing an existing vertex of degree $k$ to connect to is 
\begin{equation}
    \bbP\big(D_{t,i}=k| \PA_{t,i-1}(\delta), (m_t)_{t\ge 1}\big) = \frac{(k+\delta) N_k(t,i-1)}{ \sum_{j=1}^\infty (j + \delta) N_j(t,i-1)}.
    \label{eqn-Qkt}
\end{equation}
The denominator in this expression counts the ``total preference'' of all the vertices,  and can be written as
\begin{align*}
S_{t,i-1}(\delta) &=   \sum_{j=1}^\infty (j+\delta) N_j(t,i-1) = \sum_{v \in V_{t-1}} (\Deg_{t,i-1}(v) +\delta) \\
&= t \delta + 2M_{t-1} + (i-1). 
\end{align*}
Abbreviate the degree sequence at time $t$ by $D_t:=(D_{t,1},\dots,D_{t,m_t})$.  
The conditional  likelihood of observing $(D_t)_{t=2}^{n} = (d_t)_{t=2}^{n}$ given the edge counts $m_1,m_2,\ldots$ is
\begin{equation}
    \begin{aligned}
\bbP((D_t)_{t=2}^{n} = (d_t)_{t=2}^{n}\mid (m_t)_{t\ge 1} ) = \prod_{t=2}^{n}\prod_{i=1}^{m_t} \frac{(d_{t,i}+\delta)N_{d_{t,i}}(t,i-1)}{ S_{t,i-1}(\delta) }.
    \end{aligned}
    \label{eqn-full-lhd}
\end{equation}
We are interested in estimating $\delta$ and assume that the distribution of the edge counts $m_t$
does not contain information on this parameter. If the edge counts are observed, we can condition on them throughout,
and treat the preceding as the full likelihood of the observation $(D_t)_{t\ge 2}$. In view of \eqref{eqn-pa-evolution}
the likelihood of observing the full evolution of the network up to $\PA_n(\delta)$ is a function
of $(D_t)_{t=2}^{n}$ and hence the latter vector is statistically sufficient for this full evolution,
given the edge counts $(m_t)$. In the following we shall see that actually 
observing only the ``snapshot'' of the network $\PA_n(\delta)$ 
at time $n$ is already statistically sufficient for $\delta$, given the edge counts $(m_t)$.

Define $N_{>k}(t)$ to be the number of vertices in $\PA_t(\delta)$ of degree (strictly) bigger than $k$, 
i.e.\ $N_{>k}(t)  = \sum_{j = k+1}^{M_t} N_j(t)$.  As first observed in \cite{empiestimator}, we have the following lemma.
(In \cite{empiestimator} the l.h.s.\ of  \eqref{eqn-history-present} is called $N_{\rightarrow k}(t)$.) 
\begin{lemma}
The number of vertices with degree strictly bigger than $k$ is equal to the number of times a vertex of degree 
$k$ was chosen by the incoming vertices until and including time $n$ 
plus the number of vertices with initial degrees strictly bigger than $k$.  In other words,
if $R_{>k}(n) =2\cdot 1_{\{ m_1>k\}} + \sum_{t=2}^{n} 1_{\lrbkt{m_t >k}} $, then 
    \begin{equation}
        \sum_{t=2}^{n} \sum_{i=1}^{m_t} 1_{\{D_{t,i} = k\}} = N_{>k}(n) - R_{>k}(n).
        \label{eqn-history-present}
    \end{equation}
    \label{lemma-crucial-observation}
\end{lemma}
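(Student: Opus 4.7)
The plan is to prove the identity by telescoping the evolution of $N_{>k}$ in time and within each time step, exploiting the fact that the only way the count $N_{>k}$ can increase is either (i) by a vertex currently of degree exactly $k$ being chosen (and thereby graduating to degree $k+1$), or (ii) by the incoming vertex $v_t$ itself having initial degree $m_t>k$.

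First I would analyse the effect of a single intermediate update $(t,i)$ with $t\ge 2$ and $1\le i\le m_t$ on the count $N_{>k}(t,i)$ of vertices in $V_{t-1}$ with degree strictly larger than $k$. From the case analysis used to derive \eqref{eqn-Nkt-evo}, the chosen vertex's degree increases from $D_{t,i}$ to $D_{t,i}+1$, while all other degrees among $V_{t-1}$ are unchanged. Hence
\[
N_{>k}(t,i) - N_{>k}(t,i-1) = 1_{\{D_{t,i}=k\}}.
\]
Summing this identity over $i=1,\ldots,m_t$, using $N_{>k}(t,0)=N_{>k}(t-1)$, and then adding the contribution $1_{\{m_t>k\}}$ from the newly-integrated vertex $v_t$ (whose degree after integration is $m_t$), I obtain the one-step recursion
\[
N_{>k}(t) \;=\; N_{>k}(t-1) \;+\; \sum_{i=1}^{m_t} 1_{\{D_{t,i}=k\}} \;+\; 1_{\{m_t>k\}}, \qquad t\ge 2.
\]

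Next I would telescope this recursion from $t=2$ to $t=n$, which yields
\[
N_{>k}(n) - N_{>k}(1) \;=\; \sum_{t=2}^{n}\sum_{i=1}^{m_t} 1_{\{D_{t,i}=k\}} \;+\; \sum_{t=2}^{n} 1_{\{m_t>k\}}.
\]
Finally I would handle the initial condition at $t=1$: the graph $\PA_1(\delta)$ consists of the two vertices $v_0,v_1$ joined by $m_1$ parallel edges, so both have degree $m_1$, giving $N_{>k}(1)=2\cdot 1_{\{m_1>k\}}$. Substituting and rearranging gives exactly \eqref{eqn-history-present}, since $R_{>k}(n)=2\cdot 1_{\{m_1>k\}}+\sum_{t=2}^{n} 1_{\{m_t>k\}}$.

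The argument is essentially bookkeeping; there is no serious obstacle. The only points requiring care are the convention that $N_k(t,i-1)$ and $N_{>k}(t,i-1)$ exclude the currently-growing vertex $v_t$, which is why the term $1_{\{m_t>k\}}$ must be added separately after $v_t$ is fully integrated, and the asymmetric treatment of $t=1$, which accounts for the factor $2$ in the definition of $R_{>k}(n)$.
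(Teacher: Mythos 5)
Your proof is correct: the per-update identity $N_{>k}(t,i)-N_{>k}(t,i-1)=1_{\{D_{t,i}=k\}}$, the extra term $1_{\{m_t>k\}}$ when $v_t$ is integrated, the telescoping over $t$, and the initial value $N_{>k}(1)=2\cdot 1_{\{m_1>k\}}$ together give exactly \eqref{eqn-history-present}. The paper itself gives no proof, attributing the observation to the cited earlier work, and your bookkeeping argument is precisely the standard derivation one would supply, including the correct handling of the convention that the intermediate counts exclude the incoming vertex $v_t$.
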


We introduce the shorthand $D^{(n)} = (D_t)_{t=2}^{n}$, and from \eqref{eqn-full-lhd} have the log-likelihood function  
\begin{equation*}
\begin{aligned}
l_n(\delta| D^{(n)})  &=  \sum_{t=2}^{n}\sum_{i=1}^{m_t}\Bigl[\log N_{D_{t,i}}(t, i-1) +   \log(D_{t,i}+\delta)  -  \log S_{t,i-1}(\delta) \Bigr]\\
        &=  \sum_{t=2}^{n}\sum_{i=1}^{m_t} \log N_{D_{t,i}}(t,i-1) + \sum_{k=1}^{\infty}\log(k+\delta) (N_{>k}(n)
- R_{>k}(n))\\ 
&\qquad\qquad\qquad\qquad\qquad- \sum_{t=2}^n \sum_{i=1}^{m_t} \log S_{t,i-1}(\delta). 
   \end{aligned}
\end{equation*}
where the second equality comes from applying Lemma~\ref{lemma-crucial-observation} and the fact that 
\[
    \sum_{t=2}^{n}\sum_{i=1}^{m_t} \log(D_{t,i} +\delta) = \sum_{k=1}^{\infty} \Bigl[\log (k+\delta) \sum_{t=2}^{n}\sum_{i=1}^{m_t} 1_{\{ D_{t,i} = k\} }\Bigr].
\]  
It follows that the likelihood factorises in a part not involving $\delta$ and a part involving $\delta$ and the variables $N_{>k}(n)$, given the edge counts $(m_t)$. Thus by the factorization theorem 
(see \cite{Lehmann}, Corollary~2.6.1) the vector $\bigl(N_{>k}(n)\bigr){}_{k\ge 1}$ is  statistically sufficient for $\delta$, given $(m_t)$.
This vector is completely determined by the network at time $n$. In particular, observing the network only at time
$n$ is sufficient for $\delta$ relative to observing its evolution up to and including time $n$.

For inference on $\delta$ we can drop the first term of the log likelihood, which does not depend on $\delta$,
and normalise the remaining part by $n+1$ (note that there are $n+1$ vertices in the network at time $n$).
We take the parameter space for $\delta$ to be $[-a,b]$, for given numbers $-1<-a< b<\infty$.
The maximum likelihood estimator (MLE) of $\delta$  is    then given by
$\hat{\delta}_n  =   \argmax_{\delta \in [-a,b]} \iota_n(\delta)$, for
\begin{equation}
    \begin{aligned}
\iota_n(\delta)=\sum_{k=1}^{\infty}\log(k+\delta) \frac{N_{>k}(n)- R_{>k}(n)}{n+1}- \frac{1}{n+1} \sum_{t=2}^{n}\sum_{i=1}^{m_t} \log S_{t,i-1}(\delta). 
    \end{aligned}
    \label{eqn-iota}
\end{equation}
Provided that the maximum is taken in the interior of the parameter set,
the MLE is a solution of the likelihood equation $\iota_n'(\delta)=0$.  This derivative is given by 
\begin{equation}
    \iota'_n(\delta) = \sum_{k=1}^\infty \frac{1}{k+\delta} \frac{N_{>k}(n) - R_{>k}(n)}{n+1} - \frac{1}{n+1}\sum_{t=2}^n \sum_{i=1}^{m_t} \frac{t}{S_{t,i-1}(\delta)}. 
    \label{eqn-iota-def}
\end{equation}

\section{Consistency}
\label{sec:consistency}
The empirical degree distribution in $\PA_n(\delta)$ is defined as 
$$p_k(n)=\frac{N_k(n)}{n+1}.$$
In \cite{deijfen2009preferential} it is shown that this distribution tends to a limit as $n\ra\infty$.
Let $(r_k)_{k\ge 1}$ be the probability distribution of the initial degree (and the number of edges added in every step), i.e.\
\begin{equation}
    r_k = \bbP(m_1 = k), \quad k \ge 1.
    \label{eqn-rk-rid}
\end{equation}
Assume that this distribution has finite mean $\mu \ge 1$ and finite second moment $\mu^{(2)}$,
and write the shorthand $\theta = 2 + \delta/\mu$.  Then the limiting degree distribution $(p_k)_{k\ge 1}$ 
satisfies the recurrence relation
\begin{equation}
    p_k = \frac{k-1+\delta}{ \theta} p_{k-1} - \frac{k+\delta}{\theta} p_k + r_k,\qquad k\ge 1.
    \label{eqn-pk-recursion-rid}
\end{equation}
Starting from the initial value $p_0 = 0$, we can solve the recurrence relation by 
\begin{equation}
    p_k = \frac{\theta}{k+\delta + \theta} \sum_{i=0}^{k-1} r_{k-i} \prod_{j=1}^i \frac{k-j + \delta}{ k-j+\delta+\theta}, \quad k\ge 1, 
    \label{eqn-pk-rid}
\end{equation}
where the empty product is defined to be $1$, should it arise.  
Because $\sum_{k\ge 1} p_k = \sum_{k \ge 1} r_k = 1$, 
in view of the recurrence relation \eqref{eqn-pk-recursion-rid}, the probabilities $(p_k)_{k\ge 1}$ define a proper probability distribution. 

We list two results on the limiting degree distribution of the preferential attachment model with random initial degrees.  
See \cite{deijfen2009preferential} for proofs.
 
\begin{proposition}
  If the initial degrees $(m_t)_{t=1}^\infty$ have finite moment of order $1+\varepsilon$ for some $\varepsilon>0$, then there exists a
constant $\gamma\in(0,1/2)$ such that
    \begin{equation*}
        \lim_{n \rightarrow \infty} \bbP\Bigl( \max_{k\ge 1} |p_k(n) - p_k| \ge n^{-\gamma}\Bigr) = 0,
    \end{equation*}
    where $(p_k)_{k=1}^\infty$ is defined as in \eqref{eqn-pk-rid}. 
    \label{prop-degree-distribution}
\end{proposition}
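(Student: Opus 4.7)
I would work on the time-and-edge indexed scale by introducing the filtration $\calF_{t,i}$ generated by the initial degrees $(m_s)_{s\ge 1}$ together with all attachment decisions up to step $i$ of time $t$ in lexicographic order. For each fixed $k\ge 1$ the degree count decomposes as
\[
N_k(n) = (n+1)p_k + L_k(n) + B_k(n),
\]
where $L_k(n)$ is the martingale obtained by centering the one-step increments from \eqref{eqn-Nkt-evo} against the conditional probabilities \eqref{eqn-Qkt}, and $B_k(n) := \bbE N_k(n) - (n+1)p_k$ is a deterministic bias. The plan is to control $L_k$ uniformly in $k$ by Azuma--Hoeffding, control $B_k$ uniformly in $k$ by a Gronwall-type argument against the fixed-point relation \eqref{eqn-pk-recursion-rid}, and finally take a union bound over $k$ truncated by means of the $(1+\varepsilon)$-moment hypothesis.

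\textbf{Bias.} Taking conditional expectations in \eqref{eqn-Nkt-evo} and invoking $S_{t,i-1}(\delta)=t\delta+2M_{t-1}+(i-1)$ together with $M_{t-1}/(t-1)\to\mu$ produces a recursion for $\bar N_k(t):=\bbE N_k(t)$ whose leading behaviour is
\[
\bar N_k(t) - \bar N_k(t-1) = \frac{\mu(k-1+\delta)}{\theta t}\bar N_{k-1}(t-1) - \frac{\mu(k+\delta)}{\theta t}\bar N_k(t-1) + r_k + o(1),
\]
after integrating out the random $m_t$. Since $(t+1)p_k$ is a stationary solution of this recursion thanks to \eqref{eqn-pk-recursion-rid}, subtracting it yields for the bias a linear recursion that contracts by a factor $1-(k+\delta)/(\theta t)$ each step, and an induction on $k$ combined with Gronwall gives $|B_k(t)|\le C$ uniformly in $k$ and $t$.

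\textbf{Fluctuation and union bound.} The martingale $L_k(n)$ has at most $M_n:=\sum_{t\le n}m_t$ increments, each of size at most one since a single attachment changes $N_k$ by at most one. Conditioning on $(m_s)$, the Azuma--Hoeffding inequality gives $\bbP(|L_k(n)|\ge n^{1/2+\eta}\mid(m_s))\le 2\exp(-n^{1+2\eta}/(2M_n))$, which on the event $\{M_n\le 2\mu n\}$ of probability tending to one becomes $2\exp(-cn^{2\eta})$. Because both $N_k(n)$ and $p_k$ are zero for $k>M_n$, it suffices to union bound over $k\le M_n\le 2\mu n$, yielding
\[
\bbP\Bigl(\max_{k\ge 1}|L_k(n)|\ge n^{1/2+\eta}\Bigr)\le 4\mu n\exp(-cn^{2\eta})\to 0.
\]
Combined with the deterministic bound $\sup_k|B_k(n)|=O(1)$ and division by $n+1$, this delivers $\max_k|p_k(n)-p_k|=O(n^{-1/2+\eta})$ with high probability, establishing the proposition for any $\gamma\in(0,1/2)$. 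The main obstacle is that $m_t$ is unbounded, so the number of occupied degree bins is itself random; the $(1+\varepsilon)$-moment hypothesis is needed precisely to keep both $M_n$ and $\max_{t\le n} m_t$ of polynomial size in $n$, enabling the truncation that underlies the union bound and controlling the contribution of vertices with very high initial degree.
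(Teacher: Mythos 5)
The paper does not prove this proposition itself; it is quoted from \cite{deijfen2009preferential}, so your attempt has to be measured against that proof, which runs differently from what you propose: it controls $N_k(n)-\bbE N_k(n)$ via the Doob martingale $\bbE[N_k(n)\mid\calF_s]$, whose increments are bounded only by multiples of $m_s$, truncates $m_s\le s^a$ using the $(1+\varepsilon)$-moment hypothesis, and then separately shows $|\bbE N_k(n)-(n+1)p_k|\le n^{1-\gamma'}$; both halves lose polynomial factors, which is exactly why the conclusion is only ``there exists $\gamma\in(0,1/2)$'' rather than the rate $n^{-1/2+\eta}$ you claim.

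The central gap in your write-up is that your decomposition is internally inconsistent, and the inconsistency hides the hardest part of the proof. If $L_k(n)$ is the martingale of centered one-step increments built from \eqref{eqn-Nkt-evo} and \eqref{eqn-Qkt}, then $N_k(n)-L_k(n)$ is not $\bbE N_k(n)$ but the \emph{random} compensator
\[
\sum_{t,i}\frac{(k-1+\delta)N_{k-1}(t,i-1)-(k+\delta)N_k(t,i-1)}{S_{t,i-1}(\delta)}+\sum_{t}1_{\{k=m_t\}},
\]
which contains the very counts $N_{k-1},N_k$ you are trying to control as well as the random normaliser $S_{t,i-1}(\delta)=t\delta+2M_{t-1}+(i-1)$; so your ``deterministic bias'' $B_k(n)=\bbE N_k(n)-(n+1)p_k$ is not what remains after subtracting your $L_k(n)$, and the Azuma step and the Gronwall step never connect. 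If instead you define $L_k(n):=N_k(n)-\bbE N_k(n)$, it is no longer a sum of increments bounded by $1$, and the bounded-difference argument must go through the Doob martingale with increments of order $m_s$ — precisely where the heavy tails force the truncation and degrade the exponent. Your bias step has a second, independent problem: passing from $\bbE[N_{k-1}(t-1)/S_{t,i-1}]$ to $\bbE N_{k-1}(t-1)/\bbE S_{t,i-1}$ and replacing $2M_{t-1}+t\delta$ by $t(2\mu+\delta)$ produces per-step errors governed by the fluctuations of $M_{t-1}$, which under only a $(1+\varepsilon)$-moment are of order $t^{1/(1+\varepsilon)}$ and not summable, so ``$|B_k(t)|\le C$ uniformly'' does not follow from Gronwall as stated (that uniform boundedness is a fixed-$m$ phenomenon, compare the $\sqrt{\log n/n}$ rate in the degenerate case). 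There is also a small algebra slip: consistency with \eqref{eqn-pk-recursion-rid} requires the coefficient $(k-1+\delta)/(\theta t)$, not $\mu(k-1+\delta)/(\theta t)$. The fact that your argument appears to deliver $\max_k|p_k(n)-p_k|=O(n^{-1/2+\eta})$ for every $\eta>0$ under only $1+\varepsilon$ moments — stronger than the cited theorem — is a symptom of these unjustified steps rather than a genuine improvement.
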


In the case that the initial degree is degenerate, i.e.\ $r_m = 1$ for some integer $m \ge 1$, the rate
of convergence in this result can be improved, and the limiting degree distribution takes a simpler form, as follows.

\begin{proposition}
    If $r_m = 1$ for some integer $m \ge 1$, then there exists a constant $C>0$ such that
    \begin{equation*}
        \lim_{n \rightarrow \infty} \bbP\Bigl( \max_{k\ge 1} |p_k(n) - p_k| \ge C\sqrt{\log n /n} \Bigr) = 0,
    \end{equation*}
    where $(p_k)_{k=1}^\infty$ is defined as follows: 
    \begin{equation}
p_k = \begin{cases} 0,& \text{ if } k<m,\\
\frac{\theta \Gamma(k+\delta) \Gamma(m+\delta+\theta)}{\Gamma(m+\delta) \Gamma(k+1+\delta+\theta)},& \text{ if } k\ge m.
\end{cases}
        \label{eqn-degree-fixed-m}
    \end{equation}
Furthermore, if $m=1$, so that $r_1 = 1$, then the empirical degree 
$p_k(n)$ converges also almost surely to $p_k$, as $n \rightarrow \infty$, for every $k$.  
\end{proposition}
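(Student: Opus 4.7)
The plan has three parts: verify the closed-form expression \eqref{eqn-degree-fixed-m} for $p_k$, sharpen the rate of convergence from the $n^{-\gamma}$ of Proposition~\ref{prop-degree-distribution} to $\sqrt{\log n/n}$, and upgrade pointwise convergence to almost sure when $m=1$. For the closed form, specialising \eqref{eqn-pk-recursion-rid} to $r_j = 1_{\{j=m\}}$ and using the initial condition $p_0 = 0$, one sees by induction on $k$ that $p_k = 0$ for $k<m$, that $p_m = \theta/(m+\delta+\theta)$, and that $p_k = \frac{k-1+\delta}{k+\delta+\theta}p_{k-1}$ for $k>m$. Telescoping the product for $k>m$ and rewriting the ratios of linear factors as ratios of Gamma functions yields exactly \eqref{eqn-degree-fixed-m}.

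For the improved rate, the crucial simplification compared to the general case is that with $m_t \equiv m$ the quantity $S_{t,i-1}(\delta) = t\delta + 2m(t-1) + (i-1)$ is \emph{deterministic}. Taking conditional expectations in \eqref{eqn-Nkt-evo} gives the linear recursion
\[
\bbE[N_k(t,i) \mid \calF_{t,i-1}] = \Bigl(1 - \frac{k+\delta}{S_{t,i-1}(\delta)}\Bigr) N_k(t,i-1) + \frac{k-1+\delta}{S_{t,i-1}(\delta)} N_{k-1}(t,i-1),
\]
and a nested induction on $t$ and $k$ (matched against the analogous recursion that characterises $(n+1)p_k$) shows $|\bbE[N_k(n)] - (n+1)p_k| = O(1)$ uniformly in $k$, in the spirit of the fixed-$m$ analysis in \cite{hofstadcomplexnet}. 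For concentration, I would apply Azuma--Hoeffding to the Doob martingale $s \mapsto \bbE[N_k(n) \mid \calF_s]$. A standard coupling between two versions of the process that agree up to step $s-1$ shows that altering a single step changes $N_k(n)$ by at most a bounded constant (the bounded-differences property for PA), so Azuma yields $\bbP(|N_k(n) - \bbE[N_k(n)]| > C\sqrt{n\log n}) \le 2 n^{-c}$ with $c$ made arbitrarily large by choosing $C$. Combining with the $O(1)$ bias and union-bounding over the finite range $\{1,\dots,M_n\}=\{1,\dots,mn\}$ outside which $N_k(n)$ vanishes, and dividing by $n+1$, yields the announced $\sqrt{\log n/n}$ rate.

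For the almost sure convergence when $m=1$, the graph $\PA_n(\delta)$ is a preferential-attachment tree and the classical martingale argument of Móri (\cite{mori2002random}) applies directly: for each fixed $k$ one constructs an explicit deterministic normaliser $\phi_k(n)$ so that $N_k(n)/\phi_k(n)$ becomes a nonnegative martingale, which converges almost surely by the martingale convergence theorem, and a Gamma-function computation identifies $\phi_k(n)/(n+1) \to p_k$. A more elementary route is Borel--Cantelli applied to the concentration bound established above: with $C$ large enough, $\sum_n \bbP(|p_k(n)-p_k|>\varepsilon) < \infty$ for every fixed $k$ and every $\varepsilon>0$.

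The main obstacle is controlling the bias $\bbE[N_k(n)] - (n+1)p_k$ \emph{uniformly in $k$}. The contraction coefficient $1-(k+\delta)/S_{t,i-1}(\delta)$ is bounded away from $1$ only when $k = o(t)$, while the nominal range of $k$ stretches up to $mn$. A clean argument splits at a cutoff $k^\star_n$ of polynomial order: for $k\le k^\star_n$ the direct recursion delivers the $O(1)$ bias, while for $k>k^\star_n$ both $p_k(n)$ and $p_k$ are already smaller than $\sqrt{\log n/n}$ thanks to the power-law tail of \eqref{eqn-degree-fixed-m} combined with a high-probability upper bound on the maximal degree (itself a classical consequence of the same martingale method used above).
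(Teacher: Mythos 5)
This proposition is not proved in the paper at all: the authors state it together with Proposition~\ref{prop-degree-distribution} and defer entirely to \cite{deijfen2009preferential} (see also the fixed-$m$ treatment in \cite{hofstadcomplexnet}). Your sketch reconstructs exactly the standard argument from that literature, and it is correct in outline: the verification of \eqref{eqn-degree-fixed-m} from \eqref{eqn-pk-recursion-rid} with $r_j=1_{\{j=m\}}$ is a routine telescoping/Gamma-function computation; the rate $\sqrt{\log n/n}$ is obtained, as in the cited sources, by splitting $p_k(n)-p_k$ into a concentration part (Azuma--Hoeffding for the Doob martingale $s\mapsto\bbE[N_k(n)\mid\calF_s]$, with bounded differences supplied by a coupling of the two conditioned processes) and a bias part $|\bbE N_k(n)-(n+1)p_k|=O(1)$ obtained from the recursion for $\bbE N_k(t)$; and the almost sure statement for $m=1$ follows either from M\'ori's martingale normalisation \cite{mori2002random} or from Borel--Cantelli applied to the exponential concentration bound. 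Two places in your sketch carry the real work and are stated rather than proved: the bounded-differences property is not literal (altering one attachment can in principle cascade), and requires the stub-type coupling under which the two processes differ in the degrees of only boundedly many vertices at every later time; and the uniform-in-$k$ bias bound does not actually need your cutoff device -- in the standard induction the two coefficients $1-(k+\delta)/S_t$ and $(k-1+\delta)/S_t$ sum to $1-c/t$ for all relevant $k\le mt$, so $\max_k|\bbE N_k(t)-tp_k|$ satisfies a recursion $a_{t+1}\le(1-c/t)a_t+C/t$ whose solution stays bounded, which is both simpler and sharper than splitting at a polynomial threshold. With those details filled in as in \cite{deijfen2009preferential}, your route is the intended proof.
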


Next we give a lemma that will be essential to our analysis later.  
For a summable sequence $(a_k)_{k\ge1}$, write $a_{>k}=\sum_{j>k}a_j$.

\begin{lemma}
The following recurrence relation holds, with  $\theta = 2+ \delta/\mu$,
    \begin{equation}
        p_{>k} = \frac{k+\delta}{ \theta} p_k + r_{>k}.
        \label{eqn-pbk-pk-rk}
    \end{equation}
    \label{lemma-pbk-pk-rk}
\end{lemma}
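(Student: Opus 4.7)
The plan is to view the recurrence relation \eqref{eqn-pk-recursion-rid} as a telescoping identity and sum it. Write the recurrence in the form
\[
p_k - r_k = \frac{k-1+\delta}{\theta}\,p_{k-1} \;-\; \frac{k+\delta}{\theta}\,p_k,
\]
so that if we set $f(j) = \frac{j+\delta}{\theta}\,p_j$, this becomes $p_k - r_k = f(k-1) - f(k)$. The right-hand side is a first-order telescoping difference in $j$, which is the key structural observation.

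Next I would sum this identity from $j=1$ to $j=k$. Using $p_0 = 0$ (the initial condition stated just after \eqref{eqn-pk-recursion-rid}) gives $f(0) = 0$, so telescoping yields
\[
\sum_{j=1}^{k} (p_j - r_j) \;=\; f(0) - f(k) \;=\; -\,\frac{k+\delta}{\theta}\,p_k.
\]
Then I would use the fact, noted just before the lemma, that both $(p_k)$ and $(r_k)$ are proper probability distributions, so $\sum_{j=1}^k p_j = 1 - p_{>k}$ and $\sum_{j=1}^k r_j = 1 - r_{>k}$. Substituting, the constants $1$ cancel and one is left with
\[
-\,p_{>k} + r_{>k} \;=\; -\,\frac{k+\delta}{\theta}\,p_k,
\]
which on rearrangement is precisely \eqref{eqn-pbk-pk-rk}.

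I do not expect any real obstacle: this is a short manipulation once the telescoping form is recognised. The only minor subtlety is the need to invoke $p_0=0$ to eliminate the lower boundary term of the telescoping sum, and the appeal to $\sum_k p_k = \sum_k r_k = 1$ to pass between the partial sums and the tails; both have already been established in the paragraph preceding the lemma. Note that an alternative derivation, summing \eqref{eqn-pk-recursion-rid} directly from $j = k+1$ to $\infty$, also works but requires first verifying $\lim_{N\to\infty} \frac{N+\delta}{\theta}\,p_N = 0$, which in fact follows \emph{from} the telescoped identity itself by taking $k \to \infty$; hence the telescoping route is the most economical.
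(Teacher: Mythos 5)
Your proof is correct and is essentially the paper's own argument, which simply says to sum the recurrence \eqref{eqn-pk-recursion-rid} and cancel the telescoping terms; your version makes this explicit. Summing from below with $p_0=0$ (rather than over the tail) is a tidy way to avoid discussing the vanishing of the boundary term $\frac{N+\delta}{\theta}p_N$, but it is the same telescoping idea.
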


\begin{proof}
We simply sum up terms of \eqref{eqn-pk-recursion-rid} and cancel repeated terms. 
\end{proof}

From now on we shall put a superscript $^{(0)}$ to stress that we consider limiting distributions under the true
value $\delta_0$ of the parameter.  In view of Proposition~\ref{prop-degree-distribution}, 
$N_{>k}(n)/(n+1)$ is asymptotic to $p_{>k}^{(0)}$, while by the Law of Large Numbers
$R_{>k}(n)/(n+1)$ tends to $r_{>k}$. Furthermore, for fixed $i$ the sequence $S_{t, i-1}(\delta)/t=\delta+2\bar m_{t-1}+(i-1)/t$  is asymptotic to 
$\delta+2\mu$, again by the Law of Large Numbers. Therefore, we expect the criterion  $\iota'_n(\delta)$ given in 
\eqref{eqn-iota-def} to be
asymptotic to 
\begin{equation}
    \iota'(\delta) = \sum_{k=1}^\infty \frac{p_{>k}^{(0)} - r_{>k}}{k+\delta} - \frac{1}{2+\delta/\mu}.
\label{EqDerivativeIota}
\end{equation}
Consequently, we expect that the MLE $\hat\delta_n$ will be asymptotic to the
solution of the equation $\iota'(\delta)=0$. 
Because of \eqref{eqn-pbk-pk-rk},
$$\iota'(\delta_0) = \sum_{k=1}^\infty\frac{p_{>k}^{(0)} - r_{>k}} {k+\delta_0} - \frac{1}{2+\delta_0/\mu} = 0.$$
Thus the true parameter is indeed a solution to this equation.  
The following lemmas show that this solution is unique.

Define
\begin{equation}
q_k = \frac{p_{>k} - r_{>k}}{\mu} = \frac{(k+\delta)p_k}{2\mu + \delta}.
\label{EqDefqk}
\end{equation}

\begin{lemma}
For any nonnegative sequence  $(v_k)_{k=1}^\infty$
that is strictly decreasing with respect to $k$ and any $\delta_1 > \delta_2$, we have,
for $q_k(\delta)$ given in \eqref{EqDefqk} (where $p_k=p_k(\delta)$ as well),
    \begin{equation*}
        \sum_{k=1}^\infty q_k(\delta_1) v_k > \sum_{k=1}^\infty q_k(\delta_2)v_k.
    \end{equation*}
    \label{lemma-qd1-qd2-compare}
\end{lemma}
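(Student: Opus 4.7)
My plan is to reduce the claim, via summation by parts, to the monotonicity of the tail sums $q_{>K}(\delta) := \sum_{k>K} q_k(\delta)$ in $\delta$, and then establish this monotonicity by induction on $K$ using a forward recursion for $q_{>K}$.

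First, since $(q_k)_{k \ge 1}$ is a probability distribution for every $\delta$ (immediate from the identity $q_k = (k+\delta)p_k/(\mu\theta)$ of Lemma~\ref{lemma-pbk-pk-rk} together with $\sum_k k\,p_k = 2\mu$), summation by parts on the strictly decreasing nonnegative sequence $(v_k)$ yields
\[
  \sum_{k=1}^\infty q_k(\delta_1)v_k - \sum_{k=1}^\infty q_k(\delta_2)v_k = \sum_{K=1}^\infty (v_K - v_{K+1})\bigl[q_{>K}(\delta_2) - q_{>K}(\delta_1)\bigr].
\]
As each $v_K - v_{K+1}$ is strictly positive, the claim reduces to showing that $q_{>K}(\delta)$ is non-increasing in $\delta$ for every $K \ge 1$, with strict decrease for at least one $K$.

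Next I would derive a forward recursion for $q_{>K}(\delta)$. Substituting $q_k = (k+\delta)p_k/(\mu\theta)$ into \eqref{eqn-pk-recursion-rid} gives $q_k(k+\delta+\theta) = (k+\delta)[q_{k-1} + r_k/\mu]$ with $q_0 = 0$. Summing this from $k = K+1$ to $\infty$, telescoping, and using $q_K = q_{>K-1} - q_{>K}$ together with $\theta = 2+\delta/\mu$, one arrives at
\[
  q_{>K}(\delta) = \frac{\mu(K+1+\delta)\,q_{>K-1}(\delta) + \mu_{>K} + \delta r_{>K}}{\mu(K+2) + \delta(\mu+1)}, \qquad K \ge 1,
\]
with $q_{>0} \equiv 1$, where $\mu_{>K} := \sum_{l>K} l r_l$ and $r_{>K} := \sum_{l>K} r_l$.

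Finally, I would prove by induction on $K$ that $\partial_\delta q_{>K}(\delta) \le 0$. Differentiating the recursion, and setting $D_K(\delta) := \mu(K+2) + \delta(\mu+1)$, the sign of $\partial_\delta q_{>K}$ equals that of
\[
  \mu(\mu-K-1)\,q_{>K-1} + \mu(K+2)r_{>K} - (\mu+1)\mu_{>K} + \mu(K+1+\delta)D_K\,\partial_\delta q_{>K-1}.
\]
The last term is $\le 0$ by the inductive hypothesis. The remaining three terms are handled by a case split. When $K+1 \ge \mu$, the elementary bound $(\mu+1)l - \mu(K+2) \ge K+1-\mu \ge 0$ for every $l \ge K+1$ yields $(\mu+1)\mu_{>K} - \mu(K+2)r_{>K} \ge (K+1-\mu)r_{>K}$; combined with the non-positive first term, the non-derivative portion is bounded above by $-(K+1-\mu)[\mu q_{>K-1} + r_{>K}] \le 0$. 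When $K+1 < \mu$ (forcing $\mu \ge 3$), I would use $q_{>K-1} \le 1$ and the identities $\mu_{>K} = \mu - \mu_{\le K}$, $r_{>K} = 1 - r_{\le K}$ to rewrite the non-derivative portion as $-[\mu(K+2)r_{\le K} - (\mu+1)\mu_{\le K}]$; the bound $\mu_{\le K} \le K r_{\le K}$ then gives $\mu(K+2)r_{\le K} - (\mu+1)\mu_{\le K} \ge (2\mu-K)r_{\le K} \ge 0$ (using $K \le \mu-2 < 2\mu$). Strict inequality follows from the first case at any $K \ge \lceil\mu\rceil$, since $q_{>K-1}(\delta) > 0$ always ($p_k > 0$ for every $k$ at least as large as $\min\mathrm{supp}(r)$), which in the Abel sum propagates to the desired strict inequality.

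The main obstacle is the two-case analysis in the induction step; the surrounding differentiation and algebra, though tedious, are routine.
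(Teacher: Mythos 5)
Your proposal is correct, but it takes a genuinely different route from the paper. The paper works at the level of the individual masses: it differentiates $q_k(\delta)$ in $\delta$, derives a recursion for $u_k=\partial_\delta q_k$, shows that this sequence changes sign exactly once (positive up to some $K(\delta)$, negative afterwards, with the change occurring beyond $2\mu$), and then combines $\sum_k u_k(\delta)=0$ with the strict monotonicity of $(v_k)$ to get $\sum_k v_ku_k(\delta)>0$, finally integrating over $[\delta_2,\delta_1]$. You instead use Abel summation to reduce the claim to the statement that the tails $q_{>K}(\delta)$ are non-increasing in $\delta$ (i.e.\ $q(\delta_2)$ stochastically dominates $q(\delta_1)$), derive the closed one-step recursion $q_{>K}=\bigl[\mu(K+1+\delta)q_{>K-1}+\mu_{>K}+\delta r_{>K}\bigr]/\bigl[\mu(K+2)+\delta(\mu+1)\bigr]$ (which I checked is exact), and prove $\partial_\delta q_{>K}\le 0$ by induction on $K$ with your two-case bound; the differentiation, the bound $-(K+1-\mu)[\mu q_{>K-1}+r_{>K}]$ in the case $K+1\ge\mu$, and the bound $-(2\mu-K)r_{\le K}$ in the case $K+1<\mu$ all check out, and strictness at $K\ge\lceil\mu\rceil$ (where $q_{>K-1}>0$) feeds correctly through the Abel sum. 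What the paper's route buys is brevity once the single-crossing pattern is in hand; what yours buys is a cleaner target (tail/stochastic monotonicity rather than a sign-change argument for the derivative sequence, whose initialization $u_1>0$ implicitly uses $r_1>0$, which your argument does not need), at the price of a more algebraic induction step. Two small polish points: derive the tail recursion by summing the term recursion over $k\le K$ and using $\sum_k q_k=1$, rather than summing from $K+1$ to $\infty$, since $\sum_k kq_k$ can diverge when $\delta\le 0$ (the finite-sum derivation gives the identical formula with no boundary-term issue); and note that $\sum_k k p_k=2\mu$, which you invoke for $\sum_k q_k=1$, deserves a one-line derivation (e.g.\ by summing \eqref{eqn-pbk-pk-rk} over $k$), also $K+1<\mu$ only forces $\mu>2$, not $\mu\ge 3$ — harmless, since your bound only needs $K<2\mu$.
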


\begin{proof}
We first show that $\sum_{k\ge 1} q_k = 1$. By manipulating the recurrence relation \eqref{eqn-pk-recursion-rid}, we find 
    \begin{align*}
        \sum_{k=1}^\infty q_k & = \frac{1}{2\mu + \delta}\sum_{k=1} (k+\delta) p_k=\frac{1}{2\mu + \delta}\Bigl( \sum_{k=1}^\infty kp_k + \delta \Bigr) \\
        & = \frac{1}{2\mu + \delta} \left( \sum_{k=1}^\infty k \Big( \frac{k-1+\delta}{ 2+ \delta/\mu} p_{k-1} - \frac{k + \delta}{ 2 + \delta/\mu} p_k + r_k \Big) + \delta\right) \\
        & = \frac{1}{2\mu + \delta} \left( \sum_{k=1}^\infty kr_k + \sum_{k = 1}^\infty \frac{k+\delta}{ 2+ \delta/\mu} p_k + \delta\right) \\
        & = \frac{1}{2\mu + \delta} \Bigl( \mu + \mu\sum_{k=1}^\infty q_k + \delta\Bigr).
    \end{align*}
The only solution to this equation for  $\sum_k q_k$ has $\sum_k q_k =1$. 

By the recurrence formula \eqref{eqn-pbk-pk-rk} and \eqref{eqn-pk-recursion-rid}, we have $q_k = (k+\delta)p_k /(2\mu +\delta)$,
 and $q_{k+1} = (k+1+\delta)(q_k + r_{k+1}/\mu)/(k+1+\delta+2+\delta/\mu)$.  Therefore the 
derivative $u_k(\delta) = \frac{d}{d \delta} q_k(\delta) $ satisfies   
    \begin{equation*}
        u_{k+1}(\delta) = \frac{2 -(k+1)/\mu}{ (k+1+\delta+2+\delta/\mu)^2}(q_k(\delta) + r_{k+1}/\mu) + \frac{k+1+\delta}{ k+ 1 + \delta +2+ \delta/\mu} u_k(\delta). 
    \end{equation*}
The initial value of this sequence is positive, since
    \begin{equation*}
        u_1(\delta) = \frac{d}{d\delta} q_1(\delta) = \frac{2\mu - 1}{ \mu^2(1+ \delta+ 2 + \delta/\mu)^2}r_1 >0.
    \end{equation*}
From the recursion it follows that $u_{k+1}(\delta)$ remains positive at least as long as $k+1\le 2\mu$. 
For $k+1>2\mu$ the first term of the recursion is negative, while the second term has the sign of $u_k(\delta)$. From the fact that $\sum_k q_k(\delta)=1$ for every $\delta$, it follows that $\sum_k u_k(\delta)=0$, and hence
$u_k(\delta)$ cannot remain positive indefinitely. If $K(\delta)+1$ is the first $k$ for which $u_k(\delta)<0$, then
it must be that $K(\delta)+1>2\mu$, which implies that $u_k(\delta)<0$ for every $k>K(\delta)+1$ as well.
Since $v_k$ is decreasing it follows that
$$\sum_k v_k u_k(\delta)>\sum_{k\le K(\delta)}v_{K(\delta)}u_k(\delta) +\sum_{k> K(\delta)}v_{K(\delta)}u_k(\delta) 
= v_{K(\delta)}0=0.$$
Integrating this over the interval $[\delta_2,\delta_1]$  gives the assertion.
\end{proof}

\begin{lemma}
The function $\delta\mapsto\iota'(\delta)$ possesses a unique zero at $\delta=\delta_0$.
It is positive for $\delta<\delta_0$ and negative if $\delta>\delta_0$.
    \label{lemma-uniq-zero-mle}
\end{lemma}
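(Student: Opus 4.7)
The plan is to derive an explicit factorization of $\iota'(\delta)$ that exposes the sign of $\delta_0-\delta$. First I would substitute the identity $p_{>k}^{(0)}-r_{>k}=(k+\delta_0)p_k^{(0)}/\theta_0$ from Lemma~\ref{lemma-pbk-pk-rk} (applied at the true parameter $\delta_0$, with $\theta_0=2+\delta_0/\mu$) into \eqref{EqDerivativeIota}, and use the elementary decomposition
\[
\frac{k+\delta_0}{k+\delta}=1+\frac{\delta_0-\delta}{k+\delta}.
\]
The constant piece, summed against $p_k^{(0)}$, collapses via $\sum_k p_k^{(0)}=1$ and combines with the term $-1/(2+\delta/\mu)$ to produce a common factor of $\delta_0-\delta$; collecting everything, I expect to arrive at
\[
\iota'(\delta)=\frac{\mu(\delta_0-\delta)}{2\mu+\delta_0}\Bigl(\sum_{k=1}^\infty\frac{p_k^{(0)}}{k+\delta}-\frac{1}{2\mu+\delta}\Bigr).
\]

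The prefactor clearly has the sign of $\delta_0-\delta$, so it only remains to show that the bracket is strictly positive for every admissible $\delta$. The proof of Lemma~\ref{lemma-qd1-qd2-compare} already contains the normalization $\sum_k q_k^{(0)}=1$; combined with $q_k^{(0)}=(k+\delta_0)p_k^{(0)}/(2\mu+\delta_0)$ and $\sum_k p_k^{(0)}=1$ this forces the mean relation $\sum_k k\,p_k^{(0)}=2\mu$. Viewing $p^{(0)}$ as a probability distribution on $\bbN^+$ with mean $2\mu$, Jensen's inequality applied to the strictly convex function $x\mapsto 1/x$ on $(0,\infty)$ then yields
\[
\sum_{k=1}^\infty\frac{p_k^{(0)}}{k+\delta}>\frac{1}{\sum_{k=1}^\infty(k+\delta)p_k^{(0)}}=\frac{1}{2\mu+\delta},
\]
the inequality being strict because $p^{(0)}$ is non-degenerate.

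Multiplying the two factors gives $\iota'(\delta)>0$ for $\delta<\delta_0$ and $\iota'(\delta)<0$ for $\delta>\delta_0$, and $\iota'(\delta_0)=0$ is already known, so $\delta_0$ is the unique zero. The only non-routine step is the algebraic rearrangement that cleanly factors out $\delta_0-\delta$; no delicate analytic estimates are required, and Lemma~\ref{lemma-qd1-qd2-compare} itself is not needed beyond the normalization $\sum_k q_k^{(0)}=1$ derived in its proof.
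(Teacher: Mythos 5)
Your proof is correct, and it takes a genuinely different route from the paper. The paper rewrites $\iota'(\delta)=\sum_k \mu q_k^{(0)}/(k+\delta)-\sum_k \mu q_k(\delta)/(k+\delta)$ and then invokes the monotonicity statement of Lemma~\ref{lemma-qd1-qd2-compare} with $v_k=1/(k+\delta)$, whose proof requires a rather delicate analysis of the derivatives $u_k(\delta)=\frac{d}{d\delta}q_k(\delta)$ and a sign-change argument. You instead substitute $p_{>k}^{(0)}-r_{>k}=(k+\delta_0)p_k^{(0)}/\theta_0$ directly into \eqref{EqDerivativeIota} and factor out $\delta_0-\delta$; your algebra checks out and gives
\begin{equation*}
\iota'(\delta)=\frac{\mu(\delta_0-\delta)}{2\mu+\delta_0}\Bigl(\sum_{k=1}^\infty\frac{p_k^{(0)}}{k+\delta}-\frac{1}{2\mu+\delta}\Bigr),
\end{equation*}
after which the normalizations $\sum_k p_k^{(0)}=1$ and $\sum_k q_k^{(0)}=1$ (hence $\sum_k k\,p_k^{(0)}=2\mu$) together with strict Jensen applied to $x\mapsto 1/x$ and the positive variable $K+\delta$, $K\sim p^{(0)}$, make the bracket strictly positive, so $\iota'$ has the sign of $\delta_0-\delta$ and vanishes only at $\delta_0$. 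What your approach buys is an explicit closed-form factorization of $\iota'$ and a much more elementary argument: the only input beyond algebra is Jensen's inequality, and the comparison lemma's monotonicity machinery is bypassed entirely (only the computation $\sum_k q_k=1$ from its proof is reused). The paper's route, by contrast, establishes the more general fact that $\delta\mapsto\sum_k q_k(\delta)v_k$ is increasing for any strictly decreasing positive weights $v_k$, which is stronger than what is needed here but is not used elsewhere in the paper. One small point you should make explicit: strictness in Jensen requires $p^{(0)}$ to be non-degenerate, which follows immediately from the recursion \eqref{eqn-pk-recursion-rid} (or the power-law form), since $p_k^{(0)}>0$ for all $k$ beyond the smallest point of the support of $(r_k)$; also note $k+\delta>0$ for all $k\ge1$ because $\delta>-1$, so the reciprocal is well defined. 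With that remark added, your argument is complete.
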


\begin{proof}
Following the definition \eqref{EqDerivativeIota} of $\iota'$ it was seen that $\delta_0$ is a zero.
Fix some $\delta \neq \delta_0$. Since $1= \sum_{k} p_k(\delta)$ and $q_k(\delta) = (k+\delta)p_k(\delta)/(2\mu+\delta)$, 
we can rewrite $\iota'(\delta)$ as 
    \begin{align*}
    \iota'(\delta) & = \sum_{k=1}^\infty \frac{\mu q_k^{(0)}}{k+\delta} - \frac{1}{2+\delta/\mu} \\
    & =  \sum_{k=1}^\infty \frac{ \mu q_k^{(0)}}{k+\delta} - \sum_{k=1}^\infty \frac{(k+\delta)p_k(\delta)}{(k+\delta)(2+\delta/\mu)}\\
    & =  \sum_{k=1}^\infty \frac{ \mu q_k^{(0)}}{k+\delta} - \sum_{k=1}^\infty \frac{\mu q_k(\delta)}{k+\delta}.
    \end{align*}
Applying Lemma~\ref{lemma-qd1-qd2-compare} with $v_k = 1/(k+\delta)$,
we see that $\iota'(\delta) > 0$ when $\delta < \delta_0$, and $ \iota'(\delta) < 0$ when $\delta > \delta_0$.
\end{proof}

The proof of consistency of the MLE will be based on uniform convergence of $\iota'_n$ to $\iota'$, 
together with the uniqueness of the zero of $\iota'$. For the convergence, and also for the proof of
asymptotic normality, we need the following lemma.

\begin{lemma}[Cesàro convergence for random variables]
Let $(X_t)_{t\in\NN}$ be a sequence of random variables, $(a_t)_{t\in\NN}$ a sequence of numbers, $X$ and $a$ a random variable and
number, and let $\overline{X}_t$ and $\overline{a}_t$ be the average of the first $t$ variables or numbers, respectively.
\begin{enumerate}[i).]
\item If \(X_t \xrightarrow{\text{a.s.}} X \), then $\overline{X}_t \xrightarrow{\text{a.s.}} X$.
\item If \(X_t \xrightarrow{L_1} X \), or equivalently  \( X_t \xrightarrow{\text{P}} X\) and $(X_t)_{t\in\NN}$ is uniformly integrable, 
then $\overline{X}_t \xrightarrow{L_1} X$.
\item If \(X_t \xrightarrow{L_1} X \) and $\overline{a}_t \rightarrow a$ and $\overline{|a|}_t=O(1)$, then $\overline{(aX)}_t \xrightarrow{L_1} a X$.
    \end{enumerate}
    \label{lem-cesaro-mean}
\end{lemma}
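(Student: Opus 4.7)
The plan is to dispose of the three parts in turn by standard Cesàro-type arguments adapted from deterministic analysis.

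For (i), I would invoke the classical numerical Cesàro lemma pathwise: on the probability-one event $\{X_t\to X\}$, the sequence $X_t(\omega)\to X(\omega)$ in $\mathbb{R}$, hence its running average converges to the same limit. This yields $\overline{X}_t\to X$ almost surely.

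For (ii), the triangle inequality for the $L_1$ norm bounds $\bignorm{\overline{X}_t - X}_{L_1}$ by $t^{-1}\sum_{s=1}^t \bignorm{X_s - X}_{L_1}$, and the numerical Cesàro lemma applied to the deterministic sequence $\bignorm{X_s - X}_{L_1}\to 0$ closes the argument. The parenthetical equivalence between $L_1$ convergence and the conjunction of convergence in probability with uniform integrability is the classical Vitali theorem and can simply be cited.

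For (iii), the key step is the decomposition
\[
\frac{1}{t}\sum_{s=1}^t a_s X_s \;=\; \frac{1}{t}\sum_{s=1}^t a_s(X_s - X) \;+\; \overline{a}_t\, X.
\]
The second summand converges to $aX$ in $L_1$ because $\bignorm{\overline{a}_t X - a X}_{L_1} = |\overline{a}_t - a|\cdot \bignorm{X}_{L_1}\to 0$. For the first summand, the triangle inequality gives the weighted bound
\[
\bignorm{\tfrac{1}{t}\sum_{s=1}^t a_s(X_s - X)}_{L_1} \;\le\; \frac{1}{t}\sum_{s=1}^t |a_s|\,\bignorm{X_s - X}_{L_1},
\]
which I would control by splitting at a large index $N$: the head from $s\le N$ is $O(1/t)$ and tends to zero, while the tail is at most $\varepsilon_N \cdot \overline{|a|}_t$ with $\varepsilon_N := \sup_{s>N}\bignorm{X_s - X}_{L_1}\to 0$ and $\overline{|a|}_t = O(1)$ by hypothesis. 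Sending $t\to\infty$ and then $N\to\infty$ finishes the estimate.

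The only mild subtlety lies in (iii), where one must cleanly separate the deterministic weights from the random factors and exploit $L_1$ linearity rather than pointwise bounds; it is essentially a weighted Cesàro lemma, repackaged for the asymptotic analysis of sums such as $(n+1)^{-1}\sum_{t,i}\log S_{t,i-1}(\delta)$ that arise later in the likelihood calculations. No deep probability is required.
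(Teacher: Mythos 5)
Your proposal is correct and follows essentially the same route as the paper: part (i) is the pathwise numerical Cesàro lemma, and part (iii) uses the same decomposition $\overline{(aX)}_t-aX=\tfrac1t\sum_s a_s(X_s-X)+(\overline{a}_t-a)X$ with a split at a large index, bounding the tail by $\overline{|a|}_t\sup_{s>N}\bbE|X_s-X|$. The only cosmetic difference is that you prove (ii) directly via the triangle inequality (citing Vitali for the parenthetical equivalence), whereas the paper obtains it as the special case $a_t\equiv 1$ of (iii); both are fine.
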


\begin{proof}
Statement (i) is the usual Ces\`aro convergence, applied to almost every of the deterministic sequences $X_t(\omega)$ obtained
for elements $\omega$ of the underlying probability space.

Statement (ii) is the special case of (iii) with $a_t=1$, for every $t$.

To prove statement (iii) we decompose
\begin{align*}
   |\overline{(aX)}_t - a X| & = \Bigl| \frac{1}{t} \sum_{i=1}^t a_i X_i - \frac{1}{t} \sum_{i=1}^t a_i X + \frac{1}{t}  \sum_{i=1}^t a_i X - a X \Bigr| \\
        & \le  \frac{1}{t} \sum_{i=1}^K \bigl|a_i(X_i -X) \bigr| + \frac{1}{t}  \sum_{i=K+1}^t \bigl|a_i(X_i - X) \bigr| + |X| |\overline{a}_t - a|.
\end{align*}
Take the expectation across to bound the expected value of the left side by
$$\frac{K}{t}\max_{1\le i\le K}|a_i|\max_{t}\bbE |X_t-X|+  \overline{|a|}_t \max_{K<i\le t} \bbE|X_i - X|+
|\overline{a}_t - a |\,\bbE |X|.$$
Because $\bbE|X_i-X|\ra0$ as $i\ra\infty$, for any $\varepsilon>0$, there exists $K$ such that $\sup_{i>K} \bbE |X_i - X| < \varepsilon $.  
Then the second term is bounded above by a constant times $\varepsilon$, by the assumption on $\overline{|a|}_t$.
For fixed $K$ the first and third terms tend to zero as $t\ra\infty$. 
Thus the limsup as $t\ra\infty$ of the whole expression is bounded by a multiple of $\varepsilon$, for every $\varepsilon>0$.
\end{proof}

\begin{lemma}
The derivative $\iota'_n$ of the log-likelihood function converges uniformly to the limiting criterion $\iota'$, i.e.\ as $n \rightarrow \infty$
for every $\epsilon>0$,
    \begin{equation*}
        \sup_{\delta> -1+\epsilon } |\iota'_n - \iota' |(\delta) \xrightarrow{P} 0. 
    \end{equation*}
    \label{lemma-loglhd-uniform-conv} 
\end{lemma}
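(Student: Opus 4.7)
The plan is to decompose $\iota'_n(\delta) - \iota'(\delta) = A_n(\delta) - B_n(\delta)$ where
\[
A_n(\delta) = \sum_{k=1}^\infty \frac{1}{k+\delta}\Bigl[\frac{N_{>k}(n)-R_{>k}(n)}{n+1} - (p_{>k}^{(0)} - r_{>k})\Bigr]
\]
is the ``degree'' part and
\[
B_n(\delta) = \frac{1}{n+1}\sum_{t=2}^n\sum_{i=1}^{m_t}\frac{t}{S_{t,i-1}(\delta)} - \frac{1}{2+\delta/\mu}
\]
is the ``normalization'' part. I will show that each tends to zero in probability, uniformly over $\delta > -1+\epsilon$, by combining pointwise convergence (from the LLN and Proposition~\ref{prop-degree-distribution}) with monotonicity in $\delta$ to upgrade to uniformity.

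For $B_n(\delta)$, I rewrite $t/S_{t,i-1}(\delta) = 1/(\delta + 2M_{t-1}/t + (i-1)/t)$. The strong law of large numbers gives $M_{t-1}/t \to \mu$ almost surely and $\bar m_n \to \mu$, so that Lemma~\ref{lem-cesaro-mean}~(iii) applied with $a_t=m_t$ and $X_t$ absorbing the inner sum over $i \le m_t$ delivers the pointwise limit $\mu/(2\mu+\delta) = 1/(2+\delta/\mu)$. Because each $\delta \mapsto t/S_{t,i-1}(\delta)$ is continuous and strictly monotone decreasing, so is the whole average; together with continuity of the limit, a Dini-type argument for monotone convergence upgrades pointwise to uniform convergence on every compact subinterval $[-1+\epsilon, M]$. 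Extension to all $\delta>-1+\epsilon$ follows from the uniform decay $|\iota'_n(\delta)-\iota'(\delta)| \le C/(1+\delta)$ for some $O_P(1)$ constant $C$ not depending on $\delta$, exhibited once the compact case is settled.

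For $A_n(\delta)$, I truncate the sum at some large $K$. For $k\le K$: Proposition~\ref{prop-degree-distribution} gives $\sup_k|N_k(n)/(n+1) - p_k^{(0)}| = O_P(n^{-\gamma})$, hence also $(N_{>k}(n)-R_{>k}(n))/(n+1)-(p_{>k}^{(0)}-r_{>k}) \to_P 0$ for each fixed $k$ (using the ordinary LLN for $R_{>k}(n)/(n+1)\to r_{>k}$), and the weighted finite sum tends to zero uniformly in $\delta>-1+\epsilon$ thanks to the uniform bound $1/(k+\delta)\le 1/(k-1+\epsilon)$. For $k>K$ I exchange the order of summation by Fubini, obtaining
\[
\sum_{k>K}\frac{1}{k+\delta}\frac{N_{>k}(n)}{n+1} = \sum_{j>K+1}\frac{N_j(n)}{n+1}\sum_{\ell=K+1}^{j-1}\frac{1}{\ell+\delta},
\]
where the inner sum is $O(\log(j/K))$, uniformly in $\delta>-1+\epsilon$. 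Bounding $\log(j/K) \le C(j/K)^{1/2}$ and invoking the total-degree identity $\sum_j j\,N_j(n) = 2M_n$ (which is $O_P(n)$ by $\bbE m_1<\infty$), Cauchy--Schwarz shows the empirical tail is $O_P(1/\sqrt{K})$, uniformly in $n$ and $\delta$; the same manoeuvre handles the tails of $\sum_k r_{>k}/(k+\delta)$ and $\sum_k p_{>k}^{(0)}/(k+\delta)$, using $\bbE m_1^2<\infty$ and the power-law decay $p_j^{(0)} \asymp j^{-(3+\delta_0/\mu)}$, respectively.

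The main obstacle is the tail control in $A_n(\delta)$: the uniform rate $\sup_k|N_k(n)/(n+1)-p_k^{(0)}| = O_P(n^{-\gamma})$ from Proposition~\ref{prop-degree-distribution} is too weak to sum across all $k$, since there can be up to $M_n$ nonzero terms. The Fubini/Abel-summation step above is what makes the argument work, trading the logarithmically accumulating weights $1/(k+\delta)$ against the moment bounds on the empirical degree distribution to produce a tail estimate that is uniform simultaneously in $n$ and in $\delta>-1+\epsilon$, thereby closing the decomposition.
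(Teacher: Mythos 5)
Your proof is correct, and its skeleton is the same as the paper's: you split $\iota_n'-\iota'$ into the degree part and the normalization part (this is exactly the paper's decomposition \eqref{eqn-separate-terms-iota-difference}), truncate the degree sum at a level $K$, use Proposition~\ref{prop-degree-distribution} and the law of large numbers for $k\le K$, and treat the normalization part with the Ces\`aro lemma. The differences are tactical, and both of your variants work. For the tails $k>K$ the paper uses the Markov-type bounds $p_{>k}(n)\le 2\overline{m}_n/k$, $r_{>k}\le \mu/k$ and $p^{(0)}_{>k}\le 2\mu/k$, which are uniform in $\delta>-1+\epsilon$ after bounding $1/(k+\delta)\le 1/(k-1+\epsilon)$; your Fubini/Abel summation plus Cauchy--Schwarz rests on the same first-moment identity $\sum_j jN_j(n)=2M_n$ but in a more roundabout way, yielding $O_P(K^{-1/2})$ instead of $O_P(K^{-1})$ --- sufficient, just less direct. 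Two caveats there: you do not need (and should not invoke) the power-law form $p^{(0)}_j\asymp j^{-(3+\delta_0/\mu)}$, which in general holds only when the initial-degree tail is light enough; the finite mean $\sum_j jp_j^{(0)}=2\mu$ is all your Cauchy--Schwarz step uses, and it always holds. Also remember the empirical tail $\sum_{k>K}R_{>k}(n)/\bigl((n+1)(k+\delta)\bigr)$, which your manoeuvre covers since $R_{>k}(n)\le N_{>k}(n)$ (equivalently $\sum_k R_{>k}(n)\le 2M_n$). For the normalization part the paper obtains uniformity in $\delta$ directly, by bounding the supremum over $\delta>-1+\epsilon$ of three explicit error terms; you instead prove pointwise convergence via Lemma~\ref{lem-cesaro-mean} and upgrade it using monotonicity in $\delta$ of $\frac1{n+1}\sum_t\sum_i t/S_{t,i-1}(\delta)$ and of the continuous limit $1/(2+\delta/\mu)$, plus the $C/(1+\delta)$ decay for large $\delta$. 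That detour is valid (note it is a P\'olya/Glivenko--Cantelli-type monotonicity argument, not Dini's theorem, which concerns monotonicity in $n$), but costlier than the paper's direct sup bounds; and, as the paper does, you should state that the Ces\`aro lemma is applied to the realized numerical sequences $X_t(\omega)$ and $a_t(\omega)=m_t(\omega)$, since it is formulated for deterministic weights.
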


\begin{proof}
For $r_{>k}(n) = R_{>k}(n)/(n+1)$, the difference $\iota'_n(\delta) - \iota'(\delta)$ can be decomposed as
\begin{equation}
 \begin{aligned}
 & \sum_{k=1}^\infty \frac{p_{>k}(n) - p_{>k}^{(0)}}{k+\delta}  + \sum_{k=1}^\infty \frac{r_{>k}(n) - r_{>k}}{k+\delta}   
   -  \frac{1}{n+1}\sum_{t=2}^n\sum_{i=1}^{m_t} \frac{t}{S_{t,i-1}(\delta)} + \frac{\mu}{2\mu + \delta}. 
\end{aligned}
\label{eqn-separate-terms-iota-difference}
\end{equation}
We deal with the first two terms and the difference of the last two terms separately.  

As $kN_{>k}(n) \le  2M_n$, where
$M_n=\sum_{t=1}^nm_t$ is the total number of edges in $\PA_n(\delta)$, we have $p_{>k}(n) \le 2\overline{m}_n/k$, for every $k$.  Hence,
for $\delta\ge -\eta:=-1+\epsilon$,
 \begin{align*}
        \sum_{k=1}^\infty \frac{|p_{>k}(n) - p_{>k}^{(0)}|}{k+\delta} 
        & \le \sum_{k\le K} \frac{|p_{>k}(n) - p_{>k}^{(0)}|}{ k-\eta} + \sum_{k>K} \frac{2\overline{m}_n}{k(k-\eta)} 
+ \sum_{k>K} \frac{p_{>k}^{(0)}}{k-\eta}.
    \end{align*}
Since $\overline{m}_n \rightarrow \mu$ almost surely, by the Law of Large Numbers,
the second term on the right side can be made arbitrarily small by choice of $K$.
The same is true for the third term as $p^{(0)}_{k}$ follows a power law with exponent bigger than $2$.  
For any fixed $K$ the first term converges in probability to $0$ as $n\ra\infty$, by Proposition~\ref{prop-degree-distribution}.
Thus the full expression tends to zero.
    
The variable $r_{>k}(n) - r_{>k}$ can be written in the form $2(1_{\lrbkt{m_1>k}}-r_{>k})/(n+1)+\sum_{t=2}^n(1_{\lrbkt{m_t>k}} -r_{>k})/(n+1)$.
This is a weighted sum of independent centered Bernoulli variables with success probability $r_{>k}$. Its first absolute
moment can be bounded by its standard deviation and is bounded by a multiple of the root of $r_{>k}(1-r_{>k})/(n+1)$.
It follows that the supremum over $\delta>-\eta= -1+\epsilon$ of the absolute value of the second term has expected value bounded above by
a multiple of 
$$\frac 1{\sqrt{n+1}}\sum_{k=1}^\infty\frac{\sqrt{r_{>k}(1-r_{>k})}}{k-\eta}.$$
Since $r_{>k}\le k^{-1}\mu$, by Markov's inequality, the series converges easily, and the expression tends to zero as $n\rightarrow\infty$.
    
With slight abuse of notation write $\overline{m}_n=\sum_{t=1}^nm_t/(n+1)$. The third term can be decomposed as
\begin{align*}
& -\frac{1}{n+1}\sum_{t=2}^n \sum_{i=1}^{m_t}\Bigl[ \frac{1}{S_{t,i-1}(\delta)/t} -\frac{1}{\delta+2\overline{m}_{t-1} }\Bigr]\\
&\qquad -  \frac{1}{n+1} \sum_{t=2}^n \Bigl[\frac{ m_t}{\delta+2\overline{m}_{t-1}} - \frac{m_t}{2\mu + \delta} \Bigr] 
- \Bigl[ \frac{1}{n+1}  \sum_{t=2}^n \frac{m_t}{2\mu + \delta} -\frac{\mu}{ 2\mu+ \delta} \Bigr]\\
&= - \frac{1}{n+1}\sum_{t=2}^n \sum_{i=1}^{m_t} \frac{(i-1)/t}{(\delta+2\overline{m}_{t-1}+(i-1)/t)(\delta+2\overline{m}_{t-1})}\\
&\qquad -  \frac{1}{n+1} \sum_{t=2}^n \frac{ m_t(2\mu-2\overline{m}_{t-1})}{(\delta+2\overline{m}_{t-1})(2\mu + \delta)}
-  \frac{\overline{m}_n-\mu}{2\mu + \delta}.
\end{align*}
The supremum over $\delta>-\eta$ of the absolute value of this expression is bounded
above by
$$\frac{1}{n+1}  \sum_{t=2}^n  \frac{m_t^2/t}{(2\overline{m}_{t-1}-\eta)^2}
+ \frac{1}{n+1} \sum_{t=2}^n \frac{ m_t2|\mu-\overline{m}_{t-1}|}{(2\overline{m}_{t-1}-\eta)(2\mu-\eta)}
+  \frac{|\overline{m}_n-\mu|}{2\mu -\eta }.$$
The third term tends to zero almost surely by the Law of Large Numbers.
In the first term we have that the variables $X_t:=t^{-1}/(2\overline{m}_{t-1}-\eta)^2$ converge almost surely to 0
as $t\rightarrow\infty$, while the averages of 
the variables $a_t:=m_t^2$ tend to $\mu^{(2)}$ almost surely, again by the Law of Large Numbers. Applying  Lemma~\ref{lem-cesaro-mean} to the
sequences of \emph{numbers} $X_t(\omega)$ and $a_t(\omega)$ obtained by selecting $\omega$ from the
underlying probability space so that both convergences are valid, we see that the first term tends
to zero, for such $\omega$, and hence almost surely.
The second term tends to zero by the same argument, now with the choice
$X_t:=2|\mu-\overline{m}_{t-1}|/\bigl((2\overline{m}_{t-1}-\eta)(2\mu-\eta)\bigr)$.
\end{proof}

Combining the preceding Lemma~\ref{lemma-uniq-zero-mle} and Lemma~\ref{lemma-loglhd-uniform-conv} gives the following theorem.

\begin{theorem}
\label{prop-consistency}
The MLE $\hat \delta_n$ is consistent: $\hat\delta_n\rightarrow\delta_0$, in probability under $\delta_0$, for every
$\delta_0\in (-a,b)$.
\end{theorem}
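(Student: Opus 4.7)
The plan is to combine Lemmas~\ref{lemma-uniq-zero-mle} and~\ref{lemma-loglhd-uniform-conv} via a standard sign argument for M-estimators. Lemma~\ref{lemma-uniq-zero-mle} characterises $\delta_0$ as the unique zero of the continuous limiting criterion $\iota'$ on $(-1,\infty)$, with $\iota'>0$ strictly below $\delta_0$ and $\iota'<0$ strictly above. Since $-a>-1$, Lemma~\ref{lemma-loglhd-uniform-conv} (applied with, say, $\epsilon=(1-a)/2$) gives uniform convergence $\iota_n'\to\iota'$ in probability on the entire parameter set $[-a,b]$. The strategy is to translate the strict-sign behaviour of $\iota'$ into monotonicity of $\iota_n$ outside an arbitrarily small neighbourhood of $\delta_0$, and thereby trap the maximiser $\hat\delta_n$ in that neighbourhood.

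Concretely, I would fix an arbitrary $\rho>0$ with $[\delta_0-\rho,\delta_0+\rho]\subset(-a,b)$. Continuity of $\iota'$ together with Lemma~\ref{lemma-uniq-zero-mle} yields some $\eta>0$ such that $\iota'\ge\eta$ on the compact interval $[-a,\delta_0-\rho]$ and $\iota'\le-\eta$ on $[\delta_0+\rho,b]$. On the event $A_n=\{\sup_{\delta\in[-a,b]}|\iota_n'(\delta)-\iota'(\delta)|<\eta\}$, which has probability tending to one by Lemma~\ref{lemma-loglhd-uniform-conv}, one then has $\iota_n'>0$ throughout $[-a,\delta_0-\rho]$ and $\iota_n'<0$ throughout $[\delta_0+\rho,b]$. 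Consequently $\iota_n$ is strictly increasing on the first interval and strictly decreasing on the second, so its supremum over each is attained at an endpoint lying in $[\delta_0-\rho,\delta_0+\rho]$. Hence the global maximiser on $[-a,b]$ must itself lie in $[\delta_0-\rho,\delta_0+\rho]$ on $A_n$, and since $\rho$ was arbitrary this gives $\hat\delta_n\to\delta_0$ in probability.

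The bulk of the analytic work is already absorbed into the two supporting lemmas, so no delicate estimates remain for this theorem — essentially all that is needed is the short deterministic sign-analysis above, which notably does \emph{not} require concavity of $\iota_n$. The genuine difficulty was tackled earlier in Lemma~\ref{lemma-loglhd-uniform-conv}, whose proof had to control separately the empirical degree tail $p_{>k}(n)-p_{>k}^{(0)}$, the empirical initial-degree tail $r_{>k}(n)-r_{>k}$, and the normalising sum $\sum_{t,i} t/S_{t,i-1}(\delta)$, each demanding a different probabilistic argument (Proposition~\ref{prop-degree-distribution}, a Bernoulli deviation bound, and the Ces\`aro-type Lemma~\ref{lem-cesaro-mean}, respectively).
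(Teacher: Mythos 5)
Your proposal is correct and follows essentially the same route as the paper: uniform convergence of $\iota_n'$ to $\iota'$ (Lemma~\ref{lemma-loglhd-uniform-conv}) combined with the sign characterisation of the unique zero at $\delta_0$ (Lemma~\ref{lemma-uniq-zero-mle}) forces the sign of $\iota_n'$ outside a $\rho$-neighbourhood, hence traps the maximiser of $\iota_n$ inside it with probability tending to one. Your version merely spells out the monotonicity step slightly more explicitly than the paper does; there is no substantive difference.
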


\begin{proof}
Because $\iota'$ is continuous on $[-a,b]$ and vanishes only at $\delta_0$, we have that
$\inf_{\delta\in [-a,b]: |\delta-\delta_0|>\epsilon}|\iota'(\delta)|>0$, for every $\epsilon$. 
More precisely, by Lemma~\ref{lemma-uniq-zero-mle} it is bounded away from zero in the positive direction for $\delta<\delta_0-\epsilon$ and
in the negative direction if $\delta>\delta_0+\epsilon$. Since $\iota_n'$ tends uniformly
to $\iota'$, by Lemma~\ref{lemma-loglhd-uniform-conv}, the same is true for $\iota_n$, with probability tending to one. 
This shows that the maximum of $\iota_n$ must be contained in $[\delta_0-\epsilon,\delta_0+\epsilon]$,
with probability tending to one. 
\end{proof}

\section{Asymptotic Normality}
\label{SectionAN}
We shall apply the following martingale central limit theorem 
(see Corollary~$3.1$ in \cite{hall2014martingale} or Theorem~XIII.1.1 in \cite{pollard}) to study the asymptotic normality of the MLE.
The triangular array version with $k_n\rightarrow\infty$ given here is equivalent to the theorem in the latter
reference (stated for $k_n=n$), as remarked preceding its statement on page~171.

\begin{proposition}
Suppose that for every $n\in\NN$ and $k_n\ra\infty$ the random variables $X_{n,1},\ldots, X_{n,k_n}$ are  a martingale difference sequence 
relative to an arbitrary filtration $\mathcal{F}_{n,1}\subset \mathcal{F}_{n,2}\subset\cdots\subset \mathcal{F}_{n,k_n}$. 
If $\sum_{i=1}^{k_n} \bbE [ X_{n,i}^2 | \calF_{n,i-1}] \xrightarrow{P} v$ for a positive constant $v$, and 
$$\sum_{i=1}^{k_n} \bbE[X_{n,i}^2 1_{ \{ |X_{n,i}| > \varepsilon\} }| \calF_{n,i-1}] \xrightarrow[]{P} 0 ,$$
for every $\varepsilon >0$, then $\sum_{i=1}^{k_n}X_{n,i}\rightsquigarrow N(0,v)$.  
    \label{prop-martingale-clt}
\end{proposition}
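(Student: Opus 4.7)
The plan is to prove convergence of characteristic functions: $\bbE[\exp(it S_n)] \to \exp(-vt^2/2)$ for each fixed $t \in \bbR$, where $S_n = \sum_{i=1}^{k_n} X_{n,i}$. By Lévy's continuity theorem this gives $S_n \rightsquigarrow N(0,v)$. The route is the standard truncation-plus-martingale-factorization argument.

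First, I would truncate and recenter: set $Y_{n,i} = X_{n,i} 1_{\{|X_{n,i}| \le 1\}} - \bbE\bigl[X_{n,i} 1_{\{|X_{n,i}| \le 1\}} \mid \calF_{n,i-1}\bigr]$, so that $(Y_{n,i})$ remains a martingale difference sequence, now uniformly bounded by $2$. The Lindeberg-type hypothesis $\sum_i \bbE[X_{n,i}^2 1_{\{|X_{n,i}| > \varepsilon\}} \mid \calF_{n,i-1}] \xrightarrow{P} 0$ for every $\varepsilon > 0$ implies both $\sum_i (X_{n,i} - Y_{n,i}) \xrightarrow{P} 0$ and, combined with the convergence of the conditional variances to $v$, that $\sum_i \bbE[Y_{n,i}^2 \mid \calF_{n,i-1}] \xrightarrow{P} v$. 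It therefore suffices to prove the CLT for $\tilde S_n = \sum_i Y_{n,i}$.

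Second, the key device is the observation that, with $\phi_{n,i}(t) = \bbE[\exp(it Y_{n,i}) \mid \calF_{n,i-1}]$, the process
\begin{equation*}
M_{n,k}(t) = \frac{\exp\bigl(it \sum_{i=1}^k Y_{n,i}\bigr)}{\prod_{i=1}^k \phi_{n,i}(t)}
\end{equation*}
is an $(\calF_{n,k})_k$-martingale with $\bbE[M_{n,k_n}(t)] = 1$. A Taylor expansion, using the uniform bound $|Y_{n,i}| \le 2$, yields $\phi_{n,i}(t) = 1 - \tfrac{t^2}{2}\bbE[Y_{n,i}^2 \mid \calF_{n,i-1}] + r_{n,i}(t)$ with $\sum_i |r_{n,i}(t)| \xrightarrow{P} 0$ by a Lindeberg refinement that splits $Y_{n,i}$ at level $\varepsilon$ and lets $\varepsilon \downarrow 0$ afterwards. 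Summing $\log \phi_{n,i}(t)$ then gives $\prod_{i=1}^{k_n} \phi_{n,i}(t) \xrightarrow{P} \exp(-vt^2/2)$.

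Finally, the conclusion $\bbE[\exp(it \tilde S_n)] \to \exp(-vt^2/2)$ is extracted from the identity $\bbE[M_{n,k_n}(t)] = 1$. The main technical obstacle is to avoid division by a product of conditional characteristic functions that could be small in modulus: one needs a uniform lower bound on $\bigl|\prod_i \phi_{n,i}(t)\bigr|$ with high probability. The standard device is to work on the event that the accumulated conditional variances do not exceed $2v$, implemented by stopping at the first index $k$ where $\sum_{i \le k} \bbE[Y_{n,i}^2 \mid \calF_{n,i-1}]$ crosses $2v$; this stopping time equals $k_n$ with probability tending to one, while on the truncated product one has a deterministic lower bound $|\phi_{n,i}(t)| \ge 1 - C t^2 \bbE[Y_{n,i}^2 \mid \calF_{n,i-1}]$, giving $\bigl|\prod_i \phi_{n,i}(t)\bigr| \ge \exp(-C'(v,t))$. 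This permits dominated convergence to pass from the martingale identity to the desired limit of characteristic functions.
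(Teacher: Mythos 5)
The paper does not actually prove this proposition: it is imported as a known result, with pointers to Corollary~3.1 of Hall and Heyde and Theorem~XIII.1.1 of Pollard, so there is no internal argument to compare yours against. Your sketch is the standard ``accompanying exponential martingale'' (conditional characteristic function) proof in the McLeish--Helland style, and in outline it is a legitimate, self-contained route: truncation at a fixed level preserves the martingale-difference structure, the Lindeberg hypothesis makes the discarded part $\sum_i (X_{n,i}-Y_{n,i})$ negligible and transfers $\sum_i\bbE[Y_{n,i}^2\mid\calF_{n,i-1}]\xrightarrow{P} v$, and the identity $\bbE[\exp(it\tilde S_n)]=\bbE\bigl[M_{n,k_n}(t)\prod_i\phi_{n,i}(t)\bigr]$ together with $\prod_i\phi_{n,i}(t)\xrightarrow{P}e^{-vt^2/2}$ delivers the limit of characteristic functions. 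So the comparison is simply: you prove the tool, the paper cites it.

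There is one step that, as written, does not go through. Stopping when the accumulated conditional variance crosses $2v$ does not by itself yield the asserted lower bound $\bigl|\prod_i\phi_{n,i}(t)\bigr|\ge e^{-C'(v,t)}$: the inequality $1-x\ge e^{-cx}$ is only useful when each individual $x=Ct^2\,\bbE[Y_{n,i}^2\mid\calF_{n,i-1}]$ stays below, say, $1/2$, and since the truncated increments are only bounded by $2$, a single factor with conditional variance of order one can make $\phi_{n,i}(t)$ arbitrarily small, or even zero, for the fixed $t$ you are working with; that also invalidates the exact identity $\bbE[M_{n,k_n}(t)]=1$ on which the whole argument rests. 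The repair is standard and already available from your hypotheses: because $\max_i\bbE[Y_{n,i}^2\mid\calF_{n,i-1}]\le\varepsilon^2+\sum_i\bbE[Y_{n,i}^2 1_{\{|Y_{n,i}|>\varepsilon\}}\mid\calF_{n,i-1}]\xrightarrow{P}0$, you can enlarge the stopping rule so that you also stop (predictably, before including the offending increment, since the conditional variance is $\calF_{n,i-1}$-measurable) as soon as an individual conditional variance exceeds a small threshold depending on $t$. The stopped index still equals $k_n$ with probability tending to one, every retained factor is then uniformly bounded away from zero, the stopped $M$ is a genuine bounded martingale with expectation one, and dominated convergence finishes the passage to $\bbE[\exp(it\tilde S_n)]\to e^{-vt^2/2}$. (Alternatively, McLeish's device of using the product $\prod_i(1+itX_{n,i})$, whose modulus is at least one, avoids the division problem altogether.) With that adjustment your proof is complete.
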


\begin{lemma}
Given almost every sequence $(m_t)_{t=1}^\infty$
we have, under $\delta_0$, 
\begin{equation}
      \sqrt{n}\big(\iota'_n(\delta_0) - \iota'(\delta_0)\big) \rightsquigarrow N(0, \nu_0), 
        \label{eqn-clt-mle-z}
    \end{equation}
where $\iota'(\delta_0) =0$ and
$$\nu_0=\sum_{k=1}^\infty\frac{\mu q_k^{(0)}}{(k+\delta_0)^2}-\frac{\mu}{(2\mu+\delta_0)^2}.$$
    \label{prop-clt-mle-z}
\end{lemma}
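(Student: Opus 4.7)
The plan is to express $\sqrt n\,\iota'_n(\delta_0)$ (note $\iota'(\delta_0)=0$) as the endpoint of a martingale difference array with uniformly bounded increments and apply the martingale CLT in Proposition~\ref{prop-martingale-clt}, conditionally on a realization of $(m_t)_{t\ge 1}$ in the probability-one set on which $\overline m_n\to\mu$ and $n^{-1}\sum_{t=1}^n m_t^2\to\mu^{(2)}$. I would first use Lemma~\ref{lemma-crucial-observation} to rewrite the $k$-sum in $\iota'_n(\delta_0)$ as a sum over $(t,i)$, obtaining
$$\iota'_n(\delta_0)=\frac{1}{n+1}\sum_{t=2}^n\sum_{i=1}^{m_t} Y_{t,i},\qquad Y_{t,i}:=\frac{1}{D_{t,i}+\delta_0}-\frac{t}{S_{t,i-1}(\delta_0)}.$$
Setting $\calF_{t,i-1}:=\sigma(\PA_{t,i-1}(\delta_0),(m_s)_{s\ge 1})$, a direct calculation from \eqref{eqn-Qkt} together with $\sum_{k=1}^\infty N_k(t,i-1)=t$ gives $\bbE[Y_{t,i}\mid\calF_{t,i-1}]=0$. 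After ordering the pairs $(t,i)$ lexicographically, the rescaled variables $X_{n,(t,i)}:=\sqrt n\,Y_{t,i}/(n+1)$ form a martingale difference array summing exactly to $\sqrt n\,\iota'_n(\delta_0)$.

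For the conditional variance, a second use of \eqref{eqn-Qkt} yields
$$\bbE\bigl[Y_{t,i}^2\mid\calF_{t,i-1}\bigr] = \frac{1}{S_{t,i-1}(\delta_0)}\sum_{k=1}^\infty\frac{N_k(t,i-1)}{k+\delta_0} - \Bigl(\frac{t}{S_{t,i-1}(\delta_0)}\Bigr)^{\!2}.$$
Proposition~\ref{prop-degree-distribution} gives $N_k(t,i-1)/(t+1)\to p_k^{(0)}$ uniformly in $k$ in probability, while the LLN gives $S_{t,i-1}(\delta_0)/t\to 2\mu+\delta_0$. Using the bound $N_k(t,i-1)\le 2M_t/k$ together with the power-law tail of $p_k^{(0)}$ (exponent $>2$) to interchange limit and $k$-sum, for each fixed $i$ this expression tends in probability to $V:=(2\mu+\delta_0)^{-1}\sum_{k=1}^\infty p_k^{(0)}/(k+\delta_0)-(2\mu+\delta_0)^{-2}$, and the identity $p_k^{(0)}=(2\mu+\delta_0)q_k^{(0)}/(k+\delta_0)$ from \eqref{EqDefqk} shows $V=\nu_0/\mu$. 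Summing over $i\le m_t$ (the within-$t$ variation is of order $m_t^2/t$, whose Cesàro average vanishes since $\sum_{t=1}^n m_t^2/t=O(\log n)$ a.s.\ under the two-moment assumption) and applying Lemma~\ref{lem-cesaro-mean}(iii) with $a_t=m_t$, $a=\mu$, $X_t$ the $i$-averaged conditional variance and $X=V$ gives
$$\sum_{t=2}^n\sum_{i=1}^{m_t}\bbE\bigl[X_{n,(t,i)}^2\mid\calF_{t,i-1}\bigr]\xrightarrow{P}\mu V=\nu_0.$$

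For the Lindeberg condition, because $D_{t,i}\ge 1$, $\delta_0>-1$, and $S_{t,i-1}(\delta_0)/t\ge\delta_0+2-o(1)>0$ for large $t$ (each $m_s\ge 1$), $|Y_{t,i}|$ is bounded by a deterministic constant $C$; hence $|X_{n,(t,i)}|\le C/\sqrt n\to 0$ uniformly, so the indicator in the Lindeberg sum vanishes for all large $n$. Invoking Proposition~\ref{prop-martingale-clt} then yields the conclusion. The main obstacle lies in the conditional-variance step: exchanging the limit $t\to\infty$ with the infinite $k$-sum, handled via $N_k(t,i-1)\le 2M_t/k$ and the power-law tail of $p_k^{(0)}$, together with the weighted Cesàro step over the random weights $m_t$ under only two moments, for which Lemma~\ref{lem-cesaro-mean}(iii) is tailor-made.
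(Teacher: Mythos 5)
Your proposal is correct and follows essentially the same route as the paper: the same martingale-difference decomposition $Y_{t,i}=1/(D_{t,i}+\delta_0)-t/S_{t,i-1}(\delta_0)$, the same filtration, the trivial Lindeberg check via the deterministic bound on $|Y_{t,i}|$, and the same conditional-variance computation combined with Proposition~\ref{prop-degree-distribution}, the law of large numbers for $(m_t)$, and Lemma~\ref{lem-cesaro-mean}(iii) with weights $m_t$. The only (harmless) deviation is technical: you justify the interchange of the $k$-sum with the limit in $t$ by the tail bound $N_k(t,i-1)\le 2M_t/k$, whereas the paper uses Scheff\'e's theorem plus dominated convergence, and your within-$t$ perturbation bound of order $m_t^2/t$ plays the same role as the paper's uniform-in-$i$ bound $m_t/t$.
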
 

\begin{proof}
Throughout the proof we condition on $(m_t)_{t=1}^\infty$, without letting this show up
in the notation.

We can write
$$ \iota_n'(\delta_0) = \frac{1}{n+1}\sum_{t=2}^n\sum_{i=1}^{m_t} Y_{t,i},$$
for
$$Y_{t,i}=\frac{1}{D_{t,i}+\delta_0} - \frac{t}{ S_{t,i-1}(\delta_0)}=
\frac{1}{D_{t,i}+\delta_0} - \frac{1}{ \delta_0 +2\overline{m}_{t-1} + (i-1)/t}.  $$
As to be expected from the fact that they are score functions,
the variables $Y_{2,1},Y_{2,2},\ldots,Y_{2,m_2},Y_{3,1},\ldots,Y_{3,m_3},Y_{4,1}\ldots$ are martingale
differences relative to the filtration $\calF_{2,1}\subset\calF_{2,2}\subset\cdots\subset\calF_{2,m_2}\subset\calF_{3,1}\subset\cdots\subset\calF_{3,m_3}
\subset\calF_{4,1}\subset\cdots$  obtained by letting $\calF_{t,i}$ correspond to observing the evolution of the PA graph
up to $\PA_{t,i}(\delta)$. Indeed, in view of \eqref{eqn-Qkt},
$$\bbE[Y_{t,i} | \calF_{t,i-1}]=\sum_{k=1}^\infty \frac 1{k+\delta_0}\frac{N_k(t,i-1)(k+\delta_0)}{S_{t,i-1}(\delta_0)}-\frac t{S_{t,i-1}(\delta_0)}
=0,$$
since $\sum_k N_k(t,i-1)=t$ is the number of vertices in the graph at time $t$, for every $i$ (not counting $v_t$).
(Set $\calF_{t,0}=\calF_{t-1,m_{t-1}}$.)

We now apply Proposition~\ref{prop-martingale-clt} to the triangular array of martingale differences
$X_{2,1},\ldots, X_{2,m_2},\ldots, X_{n,m_n}$, for $n=1,2,\ldots$, and $X_{t,i}=Y_{t,i}/\sqrt{n+1}$. The $n$-th row possesses
$M_n=\sum_{t=2}^nm_t\ra\infty$ variables. Since the variables $Y_{t,i}$ are uniformly bounded by $2/(1+\delta_0)$, the
Lindeberg condition, in the display of Proposition~\ref{prop-martingale-clt}, is trivially satisfied. We need to show that
$$\frac1{n+1}\sum_{t=2}^n\sum_{i=1}^{m_t}\bbE[Y_{t,i}^2| \calF_{t,i-1}] \xrightarrow[]{P} \nu_0.$$
In view of \eqref{eqn-Qkt},
\begin{align*}\bbE[Y_{t,i}^2| \calF_{t,i-1}]
&=\bbE\Bigl[\frac1{(D_{t,i}+\delta_0)^2}| \calF_{t,i-1}\Bigr]-\Bigl(\frac t{S_{t,i-1}(\delta_0)}\Bigr)^2\\
&=\sum_{k=1}^\infty \frac 1{(k+\delta_0)^2}\frac{N_k(t,i-1) (k+\delta_0)}{S_{t,i-1}(\delta_0)}-\Bigl(\frac t{S_{t,i-1}(\delta_0)}\Bigr)^2.
\end{align*}
Since $i$ edges are added when constructing $\PA_{t,i}(\delta)$ from $\PA_{t,0}(\delta)$, the number of
nodes of degree $k$ cannot change by more than $i\le m_t$. Therefore, for every $t$,
$$\max_{1\le i\le m_t}\Bigl|\frac{N_k(t,i-1)}t-\frac{N_k(t,0)}t\Bigr|\le \frac{m_t}t.$$
Since $m_t$ has finite second moment, we have $\sum_t \bbP(m_t>t\epsilon)<\infty$, for every $\epsilon>0$,
and hence $m_t/t\rightarrow 0$, almost surely, as $t\ra\infty$. We combine
this with the preceding display and Proposition~\ref{prop-degree-distribution} 
to see that $N_k(t,i-1)/t\rightarrow p_k^{(0)}$ in probability, as $t\rightarrow\infty$, for every
fixed $k$, uniformly in $1\le i\le m_t$. As a function of $k$, the numbers 
$N_k(t,i-1)/t$ are a probability distribution on $\NN$, and hence 
$\sum_k |N_k(t,i-1)/t-p_k^{(0)}|\ra 0$, by Scheffe's theorem, uniformly in $1\le i\le m_t$.
In particular, the $N_k(t,i-1)/t$ are uniformly integrable (summable), whence by the dominated convergence
theorem also, uniformly in $1\le i\le m_t$, as $t\ra\infty$, 
$$\sum_k \Bigl|\frac{N_k(t,i-1)/t}{k+\delta_0}-\frac{p_k^{(0)}}{k+\delta_0}\Bigr|\xrightarrow[]{P} 0.$$
By the definition of $S_{t,i-1}(\delta_0)$, we also have 
$$\max_{1\le i\le m_t}\Bigl|\frac{S_{t,i-1}(\delta_0)}t-(\delta+2\overline{m}_{t-1})\Bigr|\le \frac{2m_t}t.$$
Therefore, by the Law of Large Numbers we obtain that 
$S_{t,i-1}(\delta_0)/t\ra (\delta_0+2\mu)$, almost surely, uniformly in $1\le i\le m_t$.

Combining the preceding we see that, for almost every sequence $(m_t)$, as $t\ra\infty$,
$$\frac1{m_t}\sum_{i=1}^{m_t}\sum_k \frac{N_k(t,i-1)}{(k+\delta_0)S_{t,i-1}(\delta_0)}\xrightarrow[]{P}
\sum_k \frac{p_k^{(0)}}{(k+\delta_0) (\delta_0+2\mu)}.$$
Next by Lemma~\ref{lem-cesaro-mean}, applied with $X_t$ equal to the left side of the preceding display (which is
bounded and hence uniformly integrable) and $a_t=m_t$, we see that, for almost every sequence $(m_t)$,
$$\frac1{n+1}\sum_{t=2}^n\sum_{i=1}^{m_t}\sum_{k=1}^\infty \frac{N_k(t,i-1)}{ (k+\delta_0)S_{t,i-1}(\delta_0)}
\xrightarrow[]{P} \mu \sum_k \frac{p_k^{(0)}}{(k+\delta_0) (\delta_0+2\mu)}$$
By a similar, but simpler, argument we see that
$$\frac1{n+1}\sum_{t=2}^n\sum_{i=1}^{m_t}\Bigl(\frac t{S_{t,i-1}(\delta_0)}\Bigr)^2\ra \frac{\mu}{(\delta_0+2\mu)^2}.$$
Since $p_k^{(0)}/(2\mu+\delta_0)=q_k^{(0)}/(k+\delta_0)$ by \eqref{EqDefqk},
the difference of the right sides of the last two  displays is $\nu_0$.
\end{proof}

The following is the main result of the paper.
\begin{theorem}
If $\delta_0$ is interior to the parameter set, then the MLE $\hat \delta_N$ satisfies,
for $\nu_0$ given in Lemma~\ref{prop-clt-mle-z},
    \begin{equation}    
        \label{eqn-mle-clt}
        \sqrt{n} (\hat \delta_n - \delta_0) \rightsquigarrow N(0,\nu_0^{-1}). 
    \end{equation}
    \label{thm-clt-mle}
\end{theorem}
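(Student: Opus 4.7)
The plan is to apply the standard argument for asymptotic normality of Z-estimators: Taylor-expand $\iota_n'$ around $\delta_0$, evaluate at $\hat\delta_n$, and combine the CLT for the score with a consistency argument for the observed information. By Theorem~\ref{prop-consistency}, $\hat\delta_n$ is consistent, so with probability tending to one it lies in the interior of $[-a,b]$ and satisfies $\iota_n'(\hat\delta_n)=0$. Applying the mean value theorem to the smooth function $\delta\mapsto\iota_n'(\delta)$,
\begin{equation*}
0=\iota_n'(\hat\delta_n)=\iota_n'(\delta_0)+\iota_n''(\tilde\delta_n)(\hat\delta_n-\delta_0),
\end{equation*}
for some $\tilde\delta_n$ between $\hat\delta_n$ and $\delta_0$. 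Rearranging,
\begin{equation*}
\sqrt{n}(\hat\delta_n-\delta_0)=-\frac{\sqrt{n}\,\iota_n'(\delta_0)}{\iota_n''(\tilde\delta_n)}.
\end{equation*}
By Lemma~\ref{prop-clt-mle-z} the numerator converges in distribution to $N(0,\nu_0)$, so by Slutsky it suffices to show that $\iota_n''(\tilde\delta_n)\xrightarrow{P}-\nu_0$.

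First I would compute the deterministic second derivative. Differentiating the expression \eqref{EqDerivativeIota} gives
\begin{equation*}
\iota''(\delta)=-\sum_{k=1}^\infty\frac{p_{>k}^{(0)}-r_{>k}}{(k+\delta)^2}+\frac{1/\mu}{(2+\delta/\mu)^2}.
\end{equation*}
Using $p_{>k}^{(0)}-r_{>k}=\mu q_k^{(0)}$ (the definition \eqref{EqDefqk}) and the algebraic identity $(1/\mu)/(2+\delta_0/\mu)^2=\mu/(2\mu+\delta_0)^2$, one immediately obtains $\iota''(\delta_0)=-\nu_0$, so the asymptotic variance $\nu_0^{-1}$ matches the Fisher-information-type formula as expected.

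Next I would establish uniform convergence of $\iota_n''$ to $\iota''$ on a neighborhood of $\delta_0$, via
\begin{equation*}
\iota_n''(\delta)=-\sum_{k=1}^\infty\frac{1}{(k+\delta)^2}\,\frac{N_{>k}(n)-R_{>k}(n)}{n+1}+\frac{1}{n+1}\sum_{t=2}^n\sum_{i=1}^{m_t}\frac{t}{S_{t,i-1}(\delta)^2}.
\end{equation*}
The argument mirrors the proof of Lemma~\ref{lemma-loglhd-uniform-conv}, with the series now weighted by $1/(k+\delta)^2$ rather than $1/(k+\delta)$: splitting into the degree-distribution term, the initial-degree term, and the $S_{t,i-1}$-based term, each is controlled by Proposition~\ref{prop-degree-distribution}, the bound $r_{>k}\le\mu/k$, the Law of Large Numbers for $\bar m_{t-1}$, and Lemma~\ref{lem-cesaro-mean}. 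The quadratic weights make the tail sums easier to control, so the uniform convergence $\sup_{\delta>-1+\epsilon}|\iota_n''-\iota''|(\delta)\xrightarrow{P}0$ goes through. Combined with consistency of $\hat\delta_n$ (hence of $\tilde\delta_n$) and continuity of $\iota''$ at $\delta_0$, this yields $\iota_n''(\tilde\delta_n)\xrightarrow{P}\iota''(\delta_0)=-\nu_0$.

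Finally, since $\nu_0>0$ (which can be verified from its expression, for instance by noting $\nu_0$ is the limiting conditional variance of a nontrivial martingale obtained in Lemma~\ref{prop-clt-mle-z}), another application of Slutsky delivers \eqref{eqn-mle-clt}. The main technical obstacle is the uniform convergence of $\iota_n''$ in a neighborhood of $\delta_0$, but this is essentially a repeat of the argument already carried out for $\iota_n'$ and should present no genuinely new difficulty beyond routine modification of the summability bounds.
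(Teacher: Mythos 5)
Your proposal is correct and follows essentially the same route as the paper: consistency plus the likelihood equation, a Taylor/mean-value expansion of $\iota_n'$ at $\delta_0$, the martingale CLT of Lemma~\ref{prop-clt-mle-z} for the score, uniform convergence of $\iota_n''$ to $\iota''$ near $\delta_0$ by mirroring Lemma~\ref{lemma-loglhd-uniform-conv}, the identity $\iota''(\delta_0)=-\nu_0$ via \eqref{EqDefqk}, and Slutsky. The only blemish is a differentiation slip: since $S_{t,i-1}(\delta)=t\delta+2M_{t-1}+(i-1)$, the second term of $\iota_n''(\delta)$ is $\frac{1}{n+1}\sum_{t=2}^n\sum_{i=1}^{m_t} t^2/S_{t,i-1}^2(\delta)$ (not $t/S_{t,i-1}^2(\delta)$); with the correct factor the term converges to $\mu/(2\mu+\delta)^2$ as your limit formula requires, so the argument goes through unchanged.
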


\begin{proof}
By Theorem~\ref{prop-consistency} $\hat\delta_n$ tends to $\delta_0$, hence is
with probability tending to one interior to the parameter set, and must solve the likelihood equation
$ \iota'_n(\hat{\delta}_n)=0$.
By Taylor expansion there exists $\delta'_n$ between $\delta_0$ and $\hat \delta_n$ such that
    \[
0 = \iota'_n(\hat{\delta}_n)= \iota'_n(\delta_0) + \iota''_n(\delta'_n)(\hat{\delta}_n - \delta_0).
    \]
Using that $\iota'(\delta_0) = 0$, we can reformulate the preceding display as
\[
    \sqrt{n} (\hat{\delta}_n - \delta_0)  \iota_n''(\delta'_n) =- \sqrt{n}  \bigl(\iota'_n(\delta_0) -\iota'(\delta_0)\bigr).
    \]
The expression on the right is studied in Lemma~\ref{prop-clt-mle-z},
and seen to converge in distribution to $N(0,\nu_0) $. 

The second derivative takes the form
$$\iota''_n(\delta) =- \sum_{k=1}^\infty \frac{1}{(k+\delta)^2} \frac{N_{>k}(n) - R_{>k}(n)}{n+1} 
+ \frac{1}{n+1}\sum_{t=2}^n \sum_{i=1}^{m_t} \frac{t^2}{S_{t,i-1}^2(\delta)}. $$
By a similar argument as in the proof of Lemma~\ref{lemma-loglhd-uniform-conv} we see that
this converges in probability to the second derivative $\iota''(\delta)$, uniformly in $\delta$ in a neighbourhood
of $\delta_0$. Since $\delta'_n\ra\delta_0$ in probability and $\iota''$ is continuous,
it follows that  $\iota''_n(\delta'_n)\ra\iota''(\delta_0)$. The latter limit  is given by
$$\iota''(\delta_0) =- \sum_{k=1}^\infty \frac{p_{>k}^{(0)} - r_{>k}}{(k+\delta_0)^2}
+ \frac{\mu}{(2\mu+\delta_0)^2}=-\nu_0, $$
by \eqref{EqDefqk}. An application of Slutsky's lemma concludes the proof.
\end{proof}

It is shown in the preceding proof that the \emph{observed information} $-\iota''_n(\hat\delta_n)$ is 
a consistent estimator of the inverse asymptotic variance $\nu_0$. Thus, for
$\xi_\alpha$ quantiles from the normal distribution, 
$$\hat\delta_n\pm \xi_\alpha/\sqrt{-n\iota''_n(\hat\delta_n)}$$
is a confidence interval for $\delta$.

\section{Local Asymptotic Normality and Efficiency} 
The maximum likelihood estimator is well known to be asymptotically efficient, for instance in the sense of being
\emph{locally asymptotic minimax}, if the model is locally asymptotically normal and the distributional limit
is reached locally uniformly in the parameter. See e.g.\ Chapters~7 and~8 in \cite{vdvaart2000asymptotic}.
Local uniform convergence of $\hat\delta_n$ can be proved by a slight strengthening of 
Theorem~\ref{thm-clt-mle}: consider the behaviour under every sequence of parameters $\delta_0+h/\sqrt n$,
for $h\in \bbR$, instead of only under $\delta_0$. 

Local asymptotic normality of the model entails a quadratic  expansion of the local log likelihood ratio, given by,
for given $h\in\bbR$,
$$ \log \frac{l_n(\delta_0 + h/\sqrt{n}| D^{(n)})}{l_n(\delta_0| D^{(n)})}.$$
In view of \eqref{eqn-iota} this can be written in the form 
\begin{align*}
(n+1)\bigl[\iota_n(\delta_0 + h/\sqrt{n})-\iota_n(\delta_0)\bigr]
=\frac{n+1}{\sqrt n}h\iota_n'(\delta_0)+\frac12 \frac{n+1}{n}h^2\iota_n''(\delta'_n),
\end{align*}
for $\delta'_n$ between $\delta_0$ and $\delta_0+h/\sqrt n$.
By Lemma~\ref{prop-clt-mle-z} the first term on the right tends under $\delta_0$ to $h$ times a centered normal variable with variance $\nu_0$.
The second term on the right tends in probability to $-h^2\nu_0/2$, by the same arguments
as in the proof of Theorem~\ref{thm-clt-mle}.

This establishes the local asymptotic normality for our experiment and hence the efficiency of the MLE.

\section{The Case of Fixed Initial Degree}
\label{sec:fixed-initial-degree}
If the distribution of the initial degrees $m_t$ is degenerate at some natural number $m$, then
$m(n+1)$ is the total number of edges in the network at time $n$, and hence the 
number $m$ can be considered known given the snapshot of the network at time $n$.
In this case the MLE $\hat\delta_n$ is the root of the simpler version of \eqref{eqn-iota-def}, given by
\begin{equation}
\iota_n' (\delta) 
= \sum_{k = 1}^\infty \frac{p_{>k}(n)-1_{k<m}}{k+\delta} - \frac{1}{n+1} \sum_{t=2}^n \sum_{i=1}^m \frac{1}{\delta + 2m + (i-1)/t}.
\label{iota-prime-fixed}
\end{equation}
We have $q_k = (k+\delta)p_k/(2m + \delta) $, and the main theorem simplifies as follows.

\begin{proposition}
    \label{prop-clt-fixed} 
Suppose the initial degrees $m_t$ are fixed at some deterministic value $m \in \bbN$.  
If $\delta_0$ is interior to the parameter set, then the MLE $\hat \delta_n$ defined above satisfies,
\begin{equation}    
\label{eqn-mle-clt-fixed}
\sqrt{n} (\hat \delta_n - \delta_0) \rightsquigarrow N(0,\nu_0^{-1}),
    \end{equation}
where $\nu_0$ is defined by
\begin{equation*}
    \nu_0 = \sum_{k=1}^\infty \frac{m q_k^{(0)}}{ (k+\delta_0)^2} - \frac{ m}{ (2m + \delta_0)^2}.
\end{equation*}
\end{proposition}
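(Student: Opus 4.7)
The plan is to derive this as a direct specialization of Theorem~\ref{thm-clt-mle}. With $m_t \equiv m$ one has $\mu = m$, $\mu^{(2)} = m^2$, $r_{>k} = \mathbf{1}_{\{k<m\}}$, and $R_{>k}(n)/(n+1) = \mathbf{1}_{\{k<m\}}$ exactly for every $n$. Moreover $S_{t,i-1}(\delta) = t\delta + 2m(t-1) + (i-1)$, so $t/S_{t,i-1}(\delta)$ differs from $1/(\delta + 2m + (i-1)/t)$ by $O(1/t)$ uniformly in $1 \le i \le m$, an error whose Cesàro-average in $t$ is negligible by Lemma~\ref{lem-cesaro-mean}. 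These substitutions reduce the general score \eqref{eqn-iota-def} to the simpler form \eqref{iota-prime-fixed} in the limit, and specialize the limiting variance $\nu_0$ of Lemma~\ref{prop-clt-mle-z} to the expression stated here.

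First I would invoke Theorem~\ref{prop-consistency} to obtain $\hat\delta_n \xrightarrow{P} \delta_0$, noting that every step in Sections~\ref{sec:consistency} and \ref{SectionAN} was executed for almost every sequence $(m_t)$ and therefore covers the constant sequence trivially. One could alternatively replace the appeal to Proposition~\ref{prop-degree-distribution} in Lemma~\ref{lemma-loglhd-uniform-conv} by the sharper $\sqrt{\log n/n}$ rate available in the degenerate case, but this refinement is not needed. Next I would repeat the martingale-CLT argument of Lemma~\ref{prop-clt-mle-z}: the differences
\[
Y_{t,i} = \frac{1}{D_{t,i}+\delta_0} - \frac{t}{S_{t,i-1}(\delta_0)}
\]
form a uniformly bounded martingale-difference array with respect to the canonical filtration, so the Lindeberg condition is automatic, and the conditional-variance sum converges in probability to
\[
m\sum_{k=1}^\infty \frac{p_k^{(0)}}{(k+\delta_0)(2m+\delta_0)} - \frac{m}{(2m+\delta_0)^2},
\]
which equals $\nu_0$ via the identity $p_k^{(0)}/(2m+\delta_0) = q_k^{(0)}/(k+\delta_0)$.

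Finally, a Taylor expansion of $\iota_n'$ around $\delta_0$ evaluated at the root $\hat\delta_n$, combined with the convergence $\iota_n''(\delta_n') \to -\nu_0$ in probability for any intermediate $\delta_n' \to \delta_0$ (established exactly as in the proof of Theorem~\ref{thm-clt-mle}), together with Slutsky's lemma, delivers $\sqrt{n}(\hat\delta_n - \delta_0) \rightsquigarrow N(0,\nu_0^{-1})$. The only mild bookkeeping hurdle is the $O(1/t)$ discrepancy between $t/S_{t,i-1}(\delta)$ and $1/(\delta + 2m + (i-1)/t)$, which the Cesàro lemma absorbs in a single pass; every other step is a routine substitution into the general proof.
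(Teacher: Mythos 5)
Your proposal is correct and follows exactly the route the paper takes: the paper states Proposition~\ref{prop-clt-fixed} as an immediate specialization of Lemma~\ref{prop-clt-mle-z} and Theorem~\ref{thm-clt-mle} with $\mu=m$, $q_k=(k+\delta)p_k/(2m+\delta)$, and $R_{>k}(n)/(n+1)=1_{\{k<m\}}$, which is precisely your substitution. Your extra bookkeeping (the exact identity for $R_{>k}(n)$ and the $O(1/t)$ discrepancy absorbed by the Ces\`aro lemma) merely makes explicit what the paper leaves implicit, so there is nothing further to add.
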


Equation \eqref{iota-prime-fixed} relies only on the information
observable from the final snapshot of the network.  Hence in the case of fixed initial degrees,
as far as estimating $\delta$ is concerned, only
knowing the final snapshot is as informative as knowing  the historical evolution.

\section{Quasi-maximum-likelihood Estimator}
\label{sec:quasi-mle}

The solution $\hat\delta_n$ of the likelihood equation $\iota_n'(\delta)=0$ for $\iota_n'$ given by \eqref{eqn-iota-def}
depends on the sequence of initial degrees $m_1,\ldots, m_n$ through 
the quantities $R_{>k}(n) =2\cdot 1_{\{ m_1>k\}} + \sum_{t=2}^{n} 1_{\lrbkt{m_t >k}} $ and 
$S_{t,i-1}(\delta)/t=\delta+2\sum_{i=1}^{t-1}m_i/t+(i-1)/t$.
However in many applications, knowing the entire evolutional history is unrealistic, and the
sequence $m_1,\ldots, m_n$ may well be unobserved.  In this section we propose adaptations
of the estimator that depend only on the snapshot of the network at time $n$.

We first suppose that the distribution of the initial degrees is known; recall that
$r_{>k}=\bbP(m_t>k)$, $\mu=\bbE m_t$, and $\mu^{(2)}=\bbE m_t^2$, where the second moment is assumed finite.
Bearing in mind that \eqref{eqn-iota-def} is asymptotic to \eqref{EqDerivativeIota}, 
we replace $R_{>k}(n)$ in \eqref{eqn-iota-def} by $(n+1)r_{>k}$  and the second term on the r.h.s.\ of \eqref{eqn-iota-def} 
by  $1/(2+\delta/\mu)$. We then define a \emph{quasi-maximum-likelihood estimator} (QMLE)
$\tilde{\delta}_n$ as the root of the function
\begin{equation*}
    \tilde{\iota}_n'(\delta) = \sum_{k=1}^\infty \frac{1}{k + \delta} \left( \frac{ N_{>k}(n) }{ n+1} - r_{>k}\right) - \frac{1}{2 + \delta/\mu}. 
\end{equation*}
It is easy to see $\tilde{\iota}_n'$ is asymptotic to $\iota'$.  In fact
\begin{equation*}
    \tilde{\iota}_n' (\delta)- \iota'(\delta) = \sum_{k=1}^\infty \frac{p_{>k}(n) - p_{>k}^{(0)}}{ k + \delta},
\end{equation*}
which is the first term in  \eqref{eqn-separate-terms-iota-difference}. Hence $\tilde{\iota}'$ also converges uniformly
to $\iota'$ and consistency of $\tilde{\delta}_n$ follows by similar (but simpler) arguments as for $\hat\delta_n$. This gives
the following analogue of Theorem~\ref{prop-consistency}.

\begin{proposition}
    The QMLE $\tilde{\delta}_n$ is consistent, in probability under $\delta_0$ for every $\delta_0 \in (-a,b)$. 
\end{proposition}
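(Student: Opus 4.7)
The plan is to mirror the proof of Theorem~\ref{prop-consistency} for the MLE: combine uniform convergence of $\tilde\iota_n'$ to $\iota'$ on the parameter set $[-a,b]$ with the uniqueness of the zero of $\iota'$ at $\delta_0$. The uniqueness part is already available, as Lemma~\ref{lemma-uniq-zero-mle} is a statement about the population criterion $\iota'$ and is agnostic to the estimator at hand. So the only genuine work is to establish
$$\sup_{\delta \in [-a,b]} |\tilde\iota_n'(\delta) - \iota'(\delta)| \xrightarrow{P} 0.$$

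First I would invoke the identity displayed immediately before the proposition,
$$\tilde\iota_n'(\delta) - \iota'(\delta) = \sum_{k=1}^\infty \frac{p_{>k}(n) - p_{>k}^{(0)}}{k+\delta},$$
which is precisely the first of the four terms in the decomposition \eqref{eqn-separate-terms-iota-difference} handled inside the proof of Lemma~\ref{lemma-loglhd-uniform-conv}. The cutoff argument used there---split the sum at some level $K$, bound the random tail ($k > K$) using $k N_{>k}(n) \le 2 M_n$ together with $\overline m_n \to \mu$ from the law of large numbers, bound the deterministic tail using that $p_k^{(0)}$ follows a power law with exponent strictly greater than $2$, and control the finite head ($k \le K$) by Proposition~\ref{prop-degree-distribution}---applies verbatim and is already uniform in $\delta \in [-a,b]$. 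This yields the uniform convergence above.

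With uniform convergence in hand, the conclusion follows exactly as in the proof of Theorem~\ref{prop-consistency}. By Lemma~\ref{lemma-uniq-zero-mle}, $\iota'$ is continuous on $[-a,b]$, strictly positive on $[-a,\delta_0-\epsilon]$ and strictly negative on $[\delta_0+\epsilon,b]$, hence bounded away from zero with a fixed sign on each of those sets. Uniform convergence then forces $\tilde\iota_n'$ to inherit the same signs on those sets with probability tending to one, so that any root $\tilde\delta_n$ must lie in $(\delta_0-\epsilon,\delta_0+\epsilon)$.

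I do not foresee a genuine obstacle. If anything, the QMLE case is strictly easier than the MLE case: the two terms of \eqref{eqn-separate-terms-iota-difference} arising from the Bernoulli variables $R_{>k}(n)$ and from $t/S_{t,i-1}(\delta)$ have already been replaced by their deterministic limits $r_{>k}$ and $1/(2+\delta/\mu)$ in the very definition of $\tilde\iota_n'$, so the more delicate Bernoulli-tail and Cesàro-type arguments needed for those terms in the proof of Lemma~\ref{lemma-loglhd-uniform-conv} are not required here.
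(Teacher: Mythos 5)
Your proposal is correct and follows essentially the same route as the paper: the authors also identify $\tilde\iota_n'(\delta)-\iota'(\delta)$ with the first term of \eqref{eqn-separate-terms-iota-difference}, reuse the uniform-convergence argument from Lemma~\ref{lemma-loglhd-uniform-conv} (noting, as you do, that the QMLE case is simpler because the $R_{>k}(n)$ and $S_{t,i-1}(\delta)$ terms are already replaced by their limits), and then conclude exactly as in Theorem~\ref{prop-consistency} via the unique zero of $\iota'$ from Lemma~\ref{lemma-uniq-zero-mle}. No gaps.
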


We similarly can prove the asymptotic normality of $\tilde{\delta}_n$.
We first establish the asymptotics of $\tilde{\iota}_n'$ in the next proposition.

\begin{proposition}
    \label{prop-quasi-iota-prime}
    Under $\delta_0$, we have 
    \begin{equation}
    \sqrt{n} (\tilde{\iota}_n' - \iota')(\delta_0) \rightsquigarrow N(0, \tilde{\nu}_0 + \nu_0),
\end{equation}
where $\nu_0$ is given in Lemma~\ref{prop-clt-mle-z}, and $\tilde{\nu}_0$ is defined as the r.h.s.\ 
of \eqref{eqn-quasi-iota-var-limit}, and  depends only on the distribution of the initial degree and $\delta_0$.
\end{proposition}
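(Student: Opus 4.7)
The strategy is to decompose $\sqrt{n}(\tilde{\iota}_n'-\iota')(\delta_0)=U_n+V_n$, where
$$U_n:=\sqrt{n}\,\iota_n'(\delta_0)=\sqrt{n}(\iota_n'-\iota')(\delta_0)$$
is the MLE score already treated by Lemma~\ref{prop-clt-mle-z}, and
$$V_n:=\sqrt{n}(\tilde{\iota}_n'-\iota_n')(\delta_0)=\sqrt{n}\Bigl[\sum_{k=1}^{\infty}\frac{R_{>k}(n)/(n{+}1)-r_{>k}}{k+\delta_0}+\frac{1}{n+1}\sum_{t=2}^n\sum_{i=1}^{m_t}\frac{t}{S_{t,i-1}(\delta_0)}-\frac{\mu}{2\mu+\delta_0}\Bigr]$$
is, by construction, $\sigma((m_s)_{s\ge 1})$-measurable. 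The crucial structural observation is that while $V_n$ is determined by the initial-degree sequence alone, Lemma~\ref{prop-clt-mle-z} tells us that, conditional on almost every $(m_s)$, $U_n\rightsquigarrow N(0,\nu_0)$ with a limit that does not depend on $(m_s)$.

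Granting the marginal convergence $V_n\rightsquigarrow N(0,\tilde{\nu}_0)$, the joint limit would follow by a characteristic-function argument: for any $\tau\in\RR$,
$$\bbE\bigl[e^{i\tau(U_n+V_n)}\bigr]=\bbE\Bigl[e^{i\tau V_n}\,\bbE\bigl[e^{i\tau U_n}\bigm|(m_s)_{s\ge 1}\bigr]\Bigr].$$
The inner conditional characteristic function is bounded by one and converges almost surely to $e^{-\tau^2\nu_0/2}$; together with the weak limit of $V_n$ and bounded convergence, this gives $\bbE[e^{i\tau(U_n+V_n)}]\to e^{-\tau^2(\nu_0+\tilde{\nu}_0)/2}$, so L\'evy's continuity theorem yields the stated joint limit.

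The bulk of the work is then the unconditional CLT for $V_n$, a functional of the iid $(m_s)$. Split $V_n=\sqrt{n}A_n+\sqrt{n}B_n$ with $A_n$ the first sum; using $\sum_{k\ge 1}1_{\{m>k\}}/(k+\delta_0)=g(m):=\sum_{k=1}^{m-1}1/(k+\delta_0)$, one sees that $\sqrt{n}A_n=(n+1)^{-1/2}\sum_{t=1}^n[g(m_t)-\bbE g(m_1)]+o_P(1)$, a standard iid sum. For $B_n$ I would reuse the three-term decomposition $B_n=T_1+T_2+T_3$ from the proof of Lemma~\ref{lemma-loglhd-uniform-conv}: $\sqrt{n}T_1=O_P(\log n/\sqrt n)\to 0$; $\sqrt{n}T_3=\sqrt n(\overline m_n-\mu)/(2\mu+\delta_0)$ is a classical CLT term; and $\sqrt{n}T_2$, after replacing its fluctuating denominator by its limit $(2\mu+\delta_0)^2$ and expanding $m_t(\mu-M_{t-1}/t)$ in the centred increments $\eta_s:=m_s-\mu$, reduces up to a martingale cross-term of order $O_P(\sqrt{\log n})$ to the triangular-weighted sum $-2\mu(2\mu+\delta_0)^{-2}\cdot n^{-1/2}\sum_{s<n}\eta_s(H_n-H_s)$, where $H_n:=\sum_{k\le n}1/k$. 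Thus $V_n$ is represented, up to $o_P(1)$, as a sum over $s$ of iid row-wise contributions in the bivariate centred vector $(g(m_s)-\bbE g(m_1),\eta_s)$ with weights $1$ and $H_n-H_s$; Lindeberg's CLT for triangular arrays applies (using $\sum_{s<n}(H_n-H_s)^2\sim 2n$ and $\bbE m_1^2<\infty$) and gives the required $V_n\rightsquigarrow N(0,\tilde{\nu}_0)$, the variance matching the expression in \eqref{eqn-quasi-iota-var-limit}.

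The principal obstacle I anticipate is the analysis of $\sqrt{n}T_2$: the partial-sum structure $M_{t-1}/t$ introduces long-range dependence across time, so one must carefully peel off the $\eta_t$-martingale part (which turns out negligible thanks to its variance being only $O(\log n)$) and reduce the predictable drift to a Lindeberg-tractable triangular-weighted sum. It is also precisely here that the finite second moment assumption on $m_1$ is essential for the Lindeberg condition with the logarithmic weights $H_n-H_s$.
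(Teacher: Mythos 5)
Your proposal is correct and follows essentially the same route as the paper: the same split into the $(m_t)$-measurable piece $\sqrt n(\tilde\iota_n'-\iota_n')(\delta_0)$ and the conditionally asymptotically normal score $\sqrt n\,\iota_n'(\delta_0)$ from Lemma~\ref{prop-clt-mle-z}, combined via conditioning on $(m_t)$, and the same reduction of the $(m_t)$-part (after discarding the $O_P(\log n/n)$ remainders and the martingale cross-term) to an iid sum plus a harmonically weighted sum of the centred increments, handled by a Lindeberg triangular-array CLT with $\sum_{s}(H_n-H_s)^2\sim 2n$. Your characteristic-function combination step and direct Lindeberg computation are just cosmetic variants of the paper's Lemma~\ref{lemma-sum-of-two-gaussian} and Lemma~\ref{lemma-aggregated-sample-mean-limit}.
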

For the proof of the above proposition, we need the two following lemmas.

\begin{lemma}
If $ X_1, X_2, \dots $ is a sequence of centered i.i.d.\ random variables with unit variance, 
and $g: \RR\to\RR$ is a measurable map with $\bbE g(X_1)=0$, $\bbE [g^2(X_1)]=1$ and $\bbE \bigl[X_1g(X_1)\bigr]=\rho$, 
then as $n \rightarrow \infty$, 
    \begin{gather}
        \label{eqn-aggregated-sample-mean-limit}
        \Bigl(\frac{1}{ \sqrt{n}} \sum_{i=1}^n \overline{X}_i,  \frac{1}{ \sqrt{n}} \sum_{i=1}^n g(X_i)\Bigr)^T \rightsquigarrow 
N_2\Bigl(\Bigl(\begin{matrix}0\\0\end{matrix}\Bigr),\Bigl(\begin{matrix}2 &\rho\\\rho&1\end{matrix}\Bigr)\Bigr), \\
        \frac{1}{\sqrt{n}} \sum_{i=1}^n \overline{X}_i^2 \xrightarrow{L^1} 0,
        \label{eqn-aggregated-sample-mean-second-limit}
    \end{gather}
    where $\overline{X}_i = (\sum_{j=1}^i X_j)/i$.  
    \label{lemma-aggregated-sample-mean-limit}
\end{lemma}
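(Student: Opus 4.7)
The plan is to exploit the linear reordering $\sum_{i=1}^n \overline{X}_i = \sum_{j=1}^n c_{n,j} X_j$ with $c_{n,j} := \sum_{i=j}^n 1/i$, which turns the first coordinate of the vector in \eqref{eqn-aggregated-sample-mean-limit} into a triangular-array sum of independent (but unequally weighted) variables; the second coordinate is already of this form. I would then apply the Lindeberg--Feller central limit theorem via the Cram\'er--Wold device to obtain joint normality.

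First I would record two elementary identities: $\sum_{j=1}^n c_{n,j} = n$ (each $1/i$ is counted exactly $i$ times) and $\sum_{j=1}^n c_{n,j}^2 = 2n - H_n$, where $H_n := \sum_{i=1}^n 1/i$ (expand the square and evaluate $\sum_{i_1,i_2=1}^n \min(i_1,i_2)/(i_1 i_2)$). Dividing by $n$ gives $\frac{1}{n}\sum_j c_{n,j}\to 1$ and $\frac{1}{n}\sum_j c_{n,j}^2 \to 2$, which produce the diagonal entries $2$ and $1$ as well as the off-diagonal $\rho$ of the target $2\times 2$ covariance matrix once a CLT for linear combinations is established.

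For the Cram\'er--Wold step, I would fix $a,b\in\bbR$ and write
\[
  \frac{a}{\sqrt n}\sum_{i=1}^n \overline{X}_i + \frac{b}{\sqrt n}\sum_{i=1}^n g(X_i) = \frac{1}{\sqrt n}\sum_{j=1}^n W_{n,j},\qquad W_{n,j} := a c_{n,j} X_j + b g(X_j).
\]
These are row-wise independent centered random variables with total variance
\[
  \frac{1}{n}\sum_{j=1}^n \Var(W_{n,j}) = \frac{a^2}{n}\sum_j c_{n,j}^2 + \frac{2ab\rho}{n}\sum_j c_{n,j} + b^2 \longrightarrow 2a^2 + 2ab\rho + b^2,
\]
matching $(a,b)\Sigma(a,b)^T$ for the target $\Sigma$. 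The Lindeberg condition is the only nontrivial input; the key observation is $\max_j c_{n,j} = c_{n,1} = H_n = O(\log n)$, so the event $\{|W_{n,j}| > \epsilon\sqrt n\}$ forces $|X_j|$ to exceed $\epsilon\sqrt n/(2|a| H_n)$ or $|g(X_j)|$ to exceed $\epsilon\sqrt n/(2|b|)$, and both thresholds diverge since $\sqrt n/\log n\to\infty$. The finite second moments $\bbE X_1^2 = \bbE g(X_1)^2 = 1$ then give vanishing tail integrals uniform in $j$, which combined with the bounded sum of variances closes the Lindeberg inequality.

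The second assertion is immediate: since $\overline{X}_i^2 \ge 0$ and $\bbE\overline{X}_i^2 = 1/i$,
\[
  \bbE\Bigl[\frac{1}{\sqrt n}\sum_{i=1}^n \overline{X}_i^2\Bigr] = \frac{H_n}{\sqrt n} \longrightarrow 0,
\]
and $L^1$ convergence of a nonnegative sequence to zero is equivalent to convergence of its expectation. The main obstacle is the Lindeberg verification with the logarithmically growing weights $c_{n,j}$, but the slow $\log n$ growth is comfortably dominated by the $\sqrt n$ scaling, so the step goes through cleanly.
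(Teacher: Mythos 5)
Your proposal is correct and follows essentially the same route as the paper: the same reordering $\sum_i \overline{X}_i=\sum_j\bigl(\sum_{i=j}^n 1/i\bigr)X_j$, a Lindeberg-type CLT made to work because the maximal weight is only $O(\log n)\ll\sqrt n$, and the bound $\bbE\overline{X}_i^2=1/i$ giving $H_n/\sqrt n\to0$ for the second claim. The only (harmless) differences are cosmetic: you evaluate $\sum_j c_{n,j}$ and $\sum_j c_{n,j}^2$ exactly and use Cram\'er--Wold with the univariate Lindeberg--Feller theorem, whereas the paper computes the same limits $2$ and $\rho$ via Riemann-sum approximations and invokes a multivariate Lindeberg CLT directly.
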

\begin{proof}
To prove \eqref{eqn-aggregated-sample-mean-limit} we first rewrite
    \begin{equation*}
        \frac{1}{\sqrt{n}} \sum_{i=1}^n \overline{X}_i  = \frac{1}{\sqrt{n}} \sum_{i=1}^n \frac{1}{i} \sum_{j=1}^i X_j = \frac{1}{\sqrt{n}} \sum_{j=1}^n X_j w_{j,n},
    \end{equation*}
    where $w_{j,n} := \sum_{i=j}^n 1/i$.  
We next apply the Lindeberg central limit theorem (see Theorem 2.27 of \cite{vdvaart2000asymptotic}) to the vectors
$(w_{j,n}X_j, g(X_j))$. Since \( \log ({n}/{j}) \le w_{j,n} \le \log (n/(j-1)) \) the variance of the first coordinate
and the covariance can be calculated as 
    \begin{align*}
        \Var\bigl(\frac{1}{\sqrt{n}}\sum_{i=1}^n \overline{X}_i \bigr) 
&= \frac{1}{n} \sum_{j=1}^n w_{j,n}^2 \rightarrow \int_{0}^1 (\log s)^2\, ds = \Gamma(3) = 2,\\
        \Covar\bigl(\frac{1}{\sqrt{n}}\sum_{i=1}^n \overline{X}_i,\frac{1}{\sqrt{n}}\sum_{i=1}^n g(X_i ) \bigr) 
&= \frac{1}{n} \sum_{j=1}^n w_{j,n}\rho \rightarrow -\rho\int_{0}^1 \log s\, ds = \rho\Gamma(2) = \rho.
    \end{align*}
The variance of the second coordinate is one by assumption.
To verify the Lindeberg condition we write
    \begin{align*}
        \frac{1}{n}\sum_{j=1}^n \bbE[X_j^2 w_{j,n}^2 1_{\{ |X_j| w_{j,n}/\sqrt{n} > \varepsilon \}}] & \lesssim \frac{1}{n} \sum_{j=1}^n w_{j,n}^2 \bbE[X_j^2 1_{\{ |X_j| > \sqrt{n} \varepsilon /\log n\}}]\\
        & \le\bbE[X_1^2 1_{\{ |X_1| > \sqrt{n} \varepsilon /\log n\}}] \frac{1}{n} \sum_{j=1}^n w_{j,n}^2.
    \end{align*}
The first inequality comes from that $w_{j,n} \le w_{1,n} \approx \log n$. As $X_1$ possesses a finite second moment, 
the right side tends to zero, as $n 
    \rightarrow \infty$.  Then by the Lindeberg central limit theorem, \eqref{eqn-aggregated-sample-mean-limit} holds. 

    Since $\bbE [\overline{X_i}^2]=\Var\overline{X}_i=1/i$, the expectation of the left side of 
\eqref{eqn-aggregated-sample-mean-second-limit} is equal to 
\begin{align*}
        \frac{1}{\sqrt{n}} \bbE\bigl[ \sum_{i=1}^ n \overline{X}_i^2\bigr] 
        & = \frac{1}{\sqrt{n}} \sum_{i=1}^n \frac{1}{i} \asymp \frac{\log n}{\sqrt{n}}.  
\end{align*}
This clearly tends to zero as $n\ra\infty$, proving 
\eqref{eqn-aggregated-sample-mean-second-limit}.
\end{proof}

\begin{lemma}
Suppose that $(X_n,Y_n,Z_n)$ are random vectors defined on a common probability space
such that $Y_n=g_n(Z_n)$ for measurable maps $g_n$, and, as $n \rightarrow \infty$,
    \begin{align*}
        X_n | Z_n&\rightsquigarrow N(0, \sigma^2), \text{\quad almost surely},\\
        Y_n &\rightsquigarrow N(0, \tau^2). 
    \end{align*}
    Then   $ X_n + Y_n \rightsquigarrow N(0, \sigma^2 + \tau^2)$.
 \label{lemma-sum-of-two-gaussian}
\end{lemma}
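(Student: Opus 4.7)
The plan is to prove convergence of the characteristic function of $X_n+Y_n$ to that of $N(0,\sigma^2+\tau^2)$ and then invoke L\'evy's continuity theorem. The key structural fact is that $Y_n = g_n(Z_n)$ is measurable with respect to $\sigma(Z_n)$, so for every $t \in \bbR$ we may condition on $Z_n$ and pull $e^{itY_n}$ out:
\[
\bbE\bigl[e^{it(X_n+Y_n)}\bigr] = \bbE\bigl[e^{itY_n}\,\psi_n(t,Z_n)\bigr], \qquad \psi_n(t,Z_n) := \bbE\bigl[e^{itX_n}\mid Z_n\bigr].
\]

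First I would use the hypothesis $X_n\mid Z_n \rightsquigarrow N(0,\sigma^2)$ almost surely to conclude that, almost surely, $\psi_n(t,Z_n) \to e^{-\sigma^2 t^2/2}$ for every fixed $t$ (applying the conditional weak convergence to the real and imaginary parts of $x\mapsto e^{itx}$, both bounded continuous). Next I would split
\[
\bbE\bigl[e^{itY_n}\psi_n(t,Z_n)\bigr]
= e^{-\sigma^2 t^2/2}\,\bbE\bigl[e^{itY_n}\bigr] + \bbE\Bigl[e^{itY_n}\bigl(\psi_n(t,Z_n) - e^{-\sigma^2 t^2/2}\bigr)\Bigr].
\]
The first term converges to $e^{-\sigma^2 t^2/2}\cdot e^{-\tau^2 t^2/2} = e^{-(\sigma^2+\tau^2)t^2/2}$ because $Y_n \rightsquigarrow N(0,\tau^2)$. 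For the second term, the integrand is bounded in modulus by $|\psi_n(t,Z_n) - e^{-\sigma^2 t^2/2}|\le 2$ and converges to $0$ almost surely, so dominated convergence finishes the job.

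Combining these yields $\bbE[e^{it(X_n+Y_n)}] \to e^{-(\sigma^2+\tau^2)t^2/2}$ for every $t\in\bbR$, and an appeal to L\'evy's continuity theorem gives the claimed weak convergence. I do not anticipate a serious obstacle: the only delicate point is keeping track of the fact that characteristic functions are complex-valued, so the ``almost sure'' conditional weak convergence has to be applied to the two bounded continuous functions $\cos(t\cdot)$ and $\sin(t\cdot)$ on a common null set (taken, if necessary, as a countable union over a dense set of $t$, which then extends to all $t$ by bounded convergence along sequences). Once this bookkeeping is done, the dominated convergence and L\'evy continuity steps are routine.
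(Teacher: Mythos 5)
Your proof is correct and takes essentially the same route as the paper: both arguments condition on $Z_n$, use that $Y_n=g_n(Z_n)$ is $\sigma(Z_n)$-measurable to factor out the $Y_n$-term, combine the almost-sure conditional weak convergence (Portmanteau applied to bounded continuous functions) with dominated convergence, and then invoke $Y_n\rightsquigarrow N(0,\tau^2)$. The only differences are cosmetic: the paper keeps general bounded continuous test functions $f(X_n)g(Y_n)$, deduces joint convergence to a pair of independent Gaussians and applies the continuous mapping theorem, whereas you specialize to $\cos(tX_n)$, $\sin(tX_n)$ and finish with L\'evy's continuity theorem; also, your worry about a common null set over $t$ is unnecessary, since for pointwise convergence of the characteristic functions a null set depending on the fixed $t$ suffices.
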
 

\begin{proof}
For any two continuous, bounded functions $f$ and $g$, 
    \begin{align*}
        |\bbE[f(X_n) g (Y_n)] - \textstyle \int f d\Phi_\sigma \bbE [ g(Y_n)] | & = | \bbE[ \{ \bbE[ f(X_n) | Z_n] - \textstyle\int f d\Phi_\sigma \} g(Y_n)]| \\
        & \le \| g \|_\infty \bbE\bigl|\bbE[f(X_n)|Z_n] - \textstyle \int f d\Phi_\sigma \bigr| \rightarrow 0,
    \end{align*}
by the dominated convergence theorem, since $\bbE[f(X_n)|Z_n] \ra \int f\,d\Phi_\sigma$, almost surely,
by the  Portmanteau lemma.  Again by the Portmanteau lemma, we have
 \( \bbE[g(Y_n)] \rightarrow \textstyle \int g\, d\Phi_\tau\), and hence it becomes clear that
    \begin{equation*}
        \bbE[f(X_n) g(Y_n) ] \rightarrow \int f\, d \Phi_\sigma \int g\, d \Phi_\tau.
    \end{equation*}
This implies that the vectors $(X_n,Y_n)$ converge in distribution to a vector of two 
independent centered Gaussian variables with
variances $\sigma^2$ and $\tau^2$ (see Corollary~1.4.5 in \cite{vdvaartwellner}). Next the assertion of the lemma follows by
the continuous mapping theorem. 
\end{proof}

\begin{proof}[Proof of Proposition~\ref{prop-quasi-iota-prime}]
Decompose  $\tilde{\iota}_n' - \iota'$ as
    \begin{align*}
\sqrt n(\tilde{\iota}_n'- \iota')(\delta_0) = \sqrt n(\tilde{\iota}_n' - \iota_n')(\delta_0) + \sqrt n(\iota_n' - \iota')(\delta_0).
    \end{align*}
The second term on the r.h.s.\ has been studied in Lemma~\ref{prop-clt-mle-z},
and tends conditionally in distribution to a normal distribution with variance $\nu_0$ given $(m_t)_{t=1}^\infty$, almost surely,
while the first term depends only on $(m_t)_{t=1}^\infty$. In view of  
Lemma~\ref{lemma-sum-of-two-gaussian} it suffices to show that the first term tends
in distribution to a centered normal distribution with variance $\tilde \nu_0$.

We have
\begin{align*}
(\tilde{\iota}_n' - \iota_n')(\delta_0)  
&= \sum_{k=1}^\infty \frac{ r_{>k}(n) - r_{>k}}{k + \delta_0} + 
\frac{1}{n+1} \sum_{t=2}^n \sum_{i=1}^{m_t} \frac{t}{ S_{t,i-1}(\delta_0)} - \frac{1}{ 2 + \delta_0/\mu}\\
& = \frac1{n+1}\sum_{t=2}^n \sum_{ k =1}^\infty \frac{ 1_{\{ m_t > k\}} - r_{>k}}{ k + \delta_0}
  + \frac1{n+1} \sum_{t=2}^n \frac{m_t-\mu}{ \delta_0 + 2 \mu}\\
&\qquad+\frac{2\mu}{n+1} \sum_{t=2}^n \frac{\mu-\overline{m}_{t-1}}{( \delta_0 + 2 \mu)^2 } +A_n+B_n+C_n,
    \end{align*}
where
\begin{align*}
A_n&=\frac2{n+1}\sum_{ k =1}^\infty \frac{ 1_{\{ m_1 > k\}} - r_{>k}}{ k + \delta_0}
+\Bigl(\frac{n - 1}{n+1}-1\Bigr)\frac{1}{2+\delta_0/\mu},\\
B_n&=\frac1{ n + 1} \sum_{t=2}^n  \sum_{i=1}^{m_t} \left( \frac{t}{ S_{t,i-1}(\delta_0)} - \frac{t}{ S_{t,0}(\delta_0)}\right) 
=\frac1{n+1} \sum_{t=2}^n \sum_{i=1}^{m_t} \frac{t (i-1)}{ S_{t,i-1}(\delta_0) S_{t,0}(\delta_0)}  \\
C_n&=\frac2{n+1} \sum_{t=2}^n \frac{(m_t-\mu)(\mu-\overline{m}_{t-1})}{(\delta_0 + 2 \overline{m}_{t-1}) ( \delta_0 + 2 \mu) }
        + \frac{4\mu}{n+1} \sum_{t=2}^n \frac{(\overline{m}_{t-1} - \mu)^2 }{ (\delta_0 + 2 \mu)^2 (\delta_0 + 2 \overline{m}_{t-1}) }.
    \end{align*}
    Clearly $\sqrt n A_n\ra 0$ in probability. Furthermore, since $m_t$ is independent from $\overline{m}_{t-1}$ and $m_t \ge 1$, $B_n$ is nonnegative with 
$$\bbE [B_n] \le \frac1{n+1} \bbE\Bigl[ \sum_{t=2}^n \frac{t m_t^2}{ S^2_{t,0}(\delta_0)}\Bigr] 
\le \frac{\mu^{(2)}}{(\delta_0 + 2)^2( n+1) } \sum_{t=2}^n \frac{1}{t }  \lesssim \frac {\log n}{n}.$$
Hence $\sqrt n B_n\ra 0$ in probability. The second term of $\sqrt n C_n$ tends to zero in
mean by \eqref{eqn-aggregated-sample-mean-second-limit}.  Finally, using that $m_t-\mu$ has mean zero and is independent 
of $\overline{m}_{t-1}$, we see that the terms of the first sum of $C_n$ are uncorrelated, and hence the second moment of the first term of $C_n$ is bounded above by
$$\frac{4\mu^{(2)}}{(n+1)^2 (\delta_0+2\mu)^2 (\delta_0 + 2)^2} \sum_{t=2}^n  \bbE[ (\mu-\overline{m}_{t-1})^2 ] \lesssim 
\frac{\log n}{n^2}.$$
Hence $\sqrt n C_n\ra0$ in probability as well. 

It follows that  $\sqrt n(\tilde{\iota}_n' - \iota_n')(\delta_0)$ has the same limit distribution as the sequence
$n^{-1/2}\sum_{t=2}^n \bigl[g(m_t)-2\mu(\overline{m}_{t-1}-\mu)/(\delta_0+2\mu)^2)\bigr]$, for $g$ defined by
$$g(m)=\sum_{ k =1}^\infty \frac{ 1_{\{ m > k\}} - r_{>k}}{ k + \delta_0}+
\frac{m -\mu}{ \delta_0 + 2 \mu}.$$
An application of Lemma~\ref{lemma-aggregated-sample-mean-limit} shows that this sequence
is asymptotically normal with mean zero and variance
\begin{equation}
\label{eqn-quasi-iota-var-limit}
\tilde\nu_0:=\Var[g(m_t)]+ \frac{8\mu^2\Var[m_t]}{(\delta_0+2\mu)^4}-\frac{4\mu}{(\delta_0+2\mu)^2}\bbE[g(m_t)m_t].
\end{equation}
The proof of the proposition is complete.
\end{proof}

It becomes immediate that a parallel of Theorem~\ref{thm-clt-mle} holds for the QMLE.  
\begin{theorem}
    If $\delta_0$ is interior to the parameter set, then the QMLE $\tilde{\delta}_n$ satisfies
    \begin{equation}
        \sqrt{n} (\tilde{\delta}_n - \delta_0 ) \rightsquigarrow N(0,(\nu_0 + \tilde{\nu}_0)/\nu_0^2).
    \label{eqn-qmle-clt}
    \end{equation}
    \label{thm-qmle-clt}
\end{theorem}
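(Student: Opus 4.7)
The proof follows exactly the template established for Theorem~\ref{thm-clt-mle}, with Proposition~\ref{prop-quasi-iota-prime} playing the role that Lemma~\ref{prop-clt-mle-z} played in that earlier argument. First I would invoke the consistency statement (the preceding proposition) to conclude that $\tilde\delta_n\to\delta_0$ in probability, so with probability tending to one $\tilde\delta_n$ lies in the interior of $[-a,b]$ and therefore satisfies the estimating equation $\tilde\iota_n'(\tilde\delta_n)=0$. A one-step Taylor expansion then yields some $\delta_n'$ between $\delta_0$ and $\tilde\delta_n$ with
\[
0=\tilde\iota_n'(\tilde\delta_n)=\tilde\iota_n'(\delta_0)+\tilde\iota_n''(\delta_n')(\tilde\delta_n-\delta_0).
\]
Using that $\iota'(\delta_0)=0$, this can be rearranged to
\[
\sqrt n(\tilde\delta_n-\delta_0)\,\tilde\iota_n''(\delta_n')=-\sqrt n\bigl(\tilde\iota_n'(\delta_0)-\iota'(\delta_0)\bigr).
\]

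The right-hand side is precisely the quantity handled by Proposition~\ref{prop-quasi-iota-prime}, so it converges in distribution to $N(0,\nu_0+\tilde\nu_0)$. It then suffices to show $\tilde\iota_n''(\delta_n')\to -\nu_0$ in probability and to apply Slutsky's lemma; the weak limit of $\sqrt n(\tilde\delta_n-\delta_0)$ is then centered normal with variance $(\nu_0+\tilde\nu_0)/\nu_0^2$, which is the asserted limit.

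For the convergence of the second derivative, I would differentiate $\tilde\iota_n'$ explicitly to obtain
\[
\tilde\iota_n''(\delta)=-\sum_{k=1}^\infty\frac 1{(k+\delta)^2}\Bigl(\frac{N_{>k}(n)}{n+1}-r_{>k}\Bigr)-\frac{1/\mu}{(2+\delta/\mu)^2},
\]
which depends only on the snapshot at time $n$ and not on the edge-count history, making it notably simpler than $\iota_n''$. The deterministic second term is continuous in $\delta$ and already in limit form, while the first term is dominated exactly as in the proof of Lemma~\ref{lemma-loglhd-uniform-conv} (truncating the sum at some $K$, using $kN_{>k}(n)\le 2M_n$ together with the Law of Large Numbers for the tail, and Proposition~\ref{prop-degree-distribution} for each fixed $k$). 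This delivers uniform convergence of $\tilde\iota_n''$ to $\iota''$ on a neighbourhood of $\delta_0$; combined with $\delta_n'\to\delta_0$ in probability and continuity of $\iota''$, this gives $\tilde\iota_n''(\delta_n')\to \iota''(\delta_0)=-\nu_0$, where the last identity was already computed in the proof of Theorem~\ref{thm-clt-mle} via \eqref{EqDefqk}.

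The only step that requires any real care is the second-derivative convergence above, and even that is genuinely simpler than its counterpart for $\iota_n''$ since $\tilde\iota_n''$ contains no contribution from the normalizers $S_{t,i-1}(\delta)$; all the probabilistic content of the theorem has been packaged into Proposition~\ref{prop-quasi-iota-prime}. The remainder is a standard M-estimator linearisation followed by Slutsky's lemma.
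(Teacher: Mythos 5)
Your proposal is correct and follows essentially the same route as the paper: consistency plus a Taylor expansion of $\tilde\iota_n'$ around $\delta_0$, the CLT for $\sqrt n(\tilde\iota_n'-\iota')(\delta_0)$ packaged in Proposition~\ref{prop-quasi-iota-prime}, uniform convergence of $\tilde\iota_n''$ to $\iota''$ near $\delta_0$ (which is indeed simpler than for $\iota_n''$ since the $S_{t,i-1}$ terms are absent), and Slutsky. One small slip: the derivative of $-1/(2+\delta/\mu)$ is $+\,(1/\mu)/(2+\delta/\mu)^2=\mu/(2\mu+\delta)^2$, so the second term of your displayed $\tilde\iota_n''(\delta)$ should carry a plus sign; with that correction its value at $\delta_0$ does converge to $-\nu_0$ exactly as you conclude via \eqref{EqDefqk}.
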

\begin{proof}
    The second order derivative of $\tilde{\iota}_n$ takes the form 
    \begin{equation*}
\tilde{\iota}_n''(\delta) 
= -\sum_{k=1}^\infty \frac{1}{(k+\delta)^2 } \left( \frac{N_{>k}(n)}{n+1} - r_{>k}\right) + \frac{\mu}{( 2\mu+\delta)^2}.
    \end{equation*}
This is asymptotic to $\iota''(\delta)$, just as $\iota''_n(\delta)$, uniformly in $\delta$ in a neighbourhood of
$\delta_0$, and the value at $\delta_0$ tends to $-\nu_0$, as $n \rightarrow \infty$. 
Therefore the result follows by the same argument as in the proof of Theorem~\ref{thm-clt-mle}. 
\end{proof}

The QMLE is only available if the distribution of the initial degrees is known. The mean initial degree
$\mu$ can always be estimated from the snapshot at time $n$ by the 
total number of edges divided by the number of vertices, i.e.\ by $\hat\mu_n=\sum_k kN_k(n)/(2n)$.
The QMLE could be adapted by replacing $\overline{m}_{t-1}$ in the terms $S_{t,i-1}(\delta)$ by $\hat \mu_n$.
If the initial degree distribution would be known to concentrate on two known values,
its mean would fix the full distribution (the two probabilities), and the QMLE could be
made completely data-dependent. One strategy would be to solve 
the quantities $R_{>k}(n)$ using the equations $\sum_{t=1}^n1_{m_t=a}+\sum_{t=1}^n 1_{m_t=b}=n$ and
$a\sum_{t=1}^n1_{m_t=a}+b\sum_{t=1}^n 1_{m_t=b}=\sum_k kN_k(n)/2$, if $a$ and $b$ are the two possible values
of $m_t$. However, in general the initial degree distribution appears to be confounded with the
preferential attachment model and lack of knowledge of the distribution may hamper estimation of $\delta$.


\section{Simulation Study}
\label{sec-numerical}
In this section we demonstrate the power of the MLE by applying it in the setting of the
famous paper \cite{Barabasi99emergenceScaling}.  We show that the MLE provides a better estimate
for the power-law exponent  than the ad-hoc estimator in this paper, and conclude that it is more informative about the limiting degree
distribution than the empirical degree distribution.  
We also offer an explanation for this somewhat counter-intuitive  phenomenon.

\subsection{\textit{On the shoulder of the giants}}
We present our simulation results to pay tribute to \cite{Barabasi99emergenceScaling}, who
considered (only)  the so-called linear preferential attachment model, i.e.\ $\delta_0 =0 $ (or
$f(k) = k$). In this case the limiting degree distribution can be explicitly given as $p_k = 6/(k(k+1)(k+2))$.  
Figure~$2$ of \cite{Barabasi99emergenceScaling} presented a simulation of $150,000$ vertices
with fixed number of edges $m = 5$ and true parameter $\delta_0 = 0$.  Fitting a straight line
to the log empirical degree against the log degree gave a slope $2.9$, which suggests
a parameter $\delta$ equal to $2.9-3=-0.1$, different from the true
value $\delta_0 = 0$.  Below we show that the \textsc{MLE} gives an estimate of $\delta$ with a much smaller error,
and show that the sample mean and variance of the MLE are close to the asymptotic values obtained in Theorem~\ref{thm-clt-mle}.  
Since the most important characteristic of the limiting degree distribution is its power-law exponent $\tau$, 
which depends on $\delta$ through the relation $\tau = 3 + \delta/m$ if we assume the fast decay of $r_k$ and hence no influence on the power-law exponent from the initial degree distribution, the MLE in turn also gives a better 
insight in the limiting degree distribution. 

As mentioned in Section~\ref{sec:intro} a practical problem of fitting a straight line to the plot of the log empirical degree
$\log p_k(n)$ against $\log k$, is that the variance of the former blows up with increasing degree $k$.
This makes a careful choice of the cutoff mandatory, but difficult, as is illustrated in Figure~\ref{figure:loglog}, which
is an independent replicate of Figure~$2$(A) of  \cite{Barabasi99emergenceScaling}. Actually, in our simulations we found it to be non-trivial to 
obtain the estimate $2.9$ of the negative of the slope (as found in
\cite{Barabasi99emergenceScaling}).  In contrast, the MLE implicitly weighs the information in the various degrees,
and automatically yields  accurate estimates.

We studied the sampling distribution of the MLE by generating $3,500$ independent replicates of the linear
PA graph, and computing $\hat \delta$ for each replicate. A histogram of the 3,500 
maximum likelihood estimates $\hat{\delta}$ is plotted in Figure~\ref{figure:mle-plot},
and numerical summaries are given in Table~\ref{table:mle-summary}.
We compared these to the asymptotic distribution given  in Theorem~\ref{thm-clt-mle}, and
a ``best fitting'' normal distribution with mean and variance taken equal to the sample mean and variance
of the 3,500 replicates. Inspection of Figure~\ref{figure:mle-plot} and Table~\ref{table:mle-summary} gives
the following observations.
\begin{itemize}
\item The MLE is very accurate.  In 3,500 replicates the \textit{worst} estimate deviated about $0.06$ from the truth $\delta_0 = 0$
(Table~\ref{table:mle-summary}), better than the error $2.9-3=-0.1$ obtained in \cite{Barabasi99emergenceScaling}. 
\item The bias of the MLE is estimated at about  $0.00082$ and the variance at $0.00033$ (Table~\ref{table:mle-summary}, 
``Mean'' and ``Samp.\ Var''), showing that the bias gives
a negligible contribution to the mean square error, as predicted by Theorem~\ref{thm-clt-mle}. 
The estimated variance is  remarkably close to the asymptotic variance predicted
 by Theorem~\ref{thm-clt-mle} (given in the column  ``Pred.\ Var.'' of Table~\ref{table:mle-summary}). 
\item From the relation  $\tau = 3+ \delta/m$ we obtain the sample bias and variance in the estimates of 
the power-law exponent.
\item The sampling distribution of $\hat{\delta}_n$ with $n = 150,000$ and fixed $m = 5$ is quite close to a normal distribution
(Figure~\ref{figure:mle-plot}), although it appears slightly skewed to the right.
\item The normal law that fits best to the sampling distribution is very close to the 
asymptotic normal distribution (red  and blue curves in Figure~\ref{figure:mle-plot}).
\end{itemize}

\begin{figure}[htbp]
 \centering
 \includegraphics[height=.4\textheight]{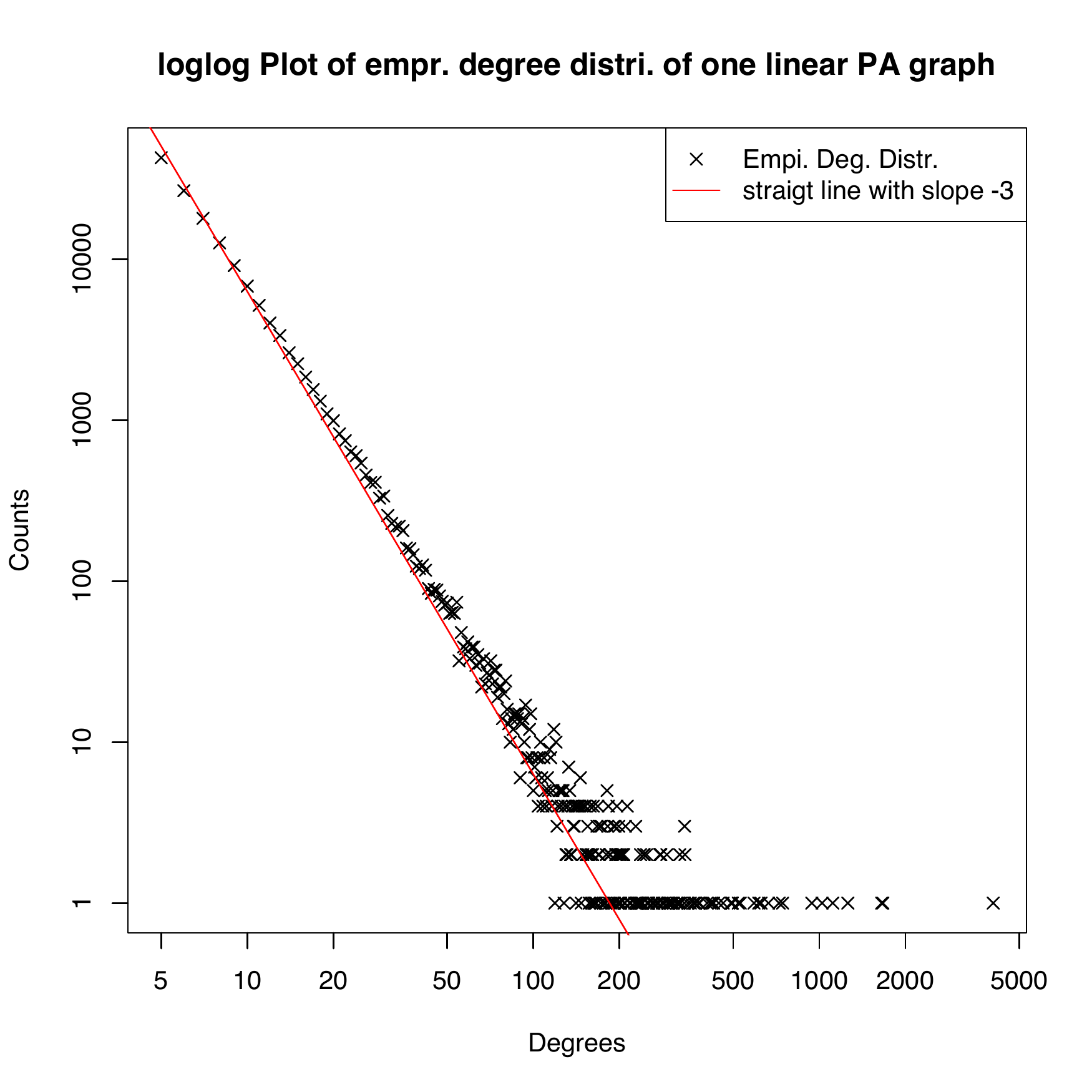}
\caption{Plot of log Empirical Degree $p_k(n)$ (vertical axis) versus log degree $\log k$ of in one simulation of a Linear Preferential Attachment
graph with $m=5$ and $n=150,000$.}
 \label{figure:loglog}
\end{figure}

\begin{figure}[htbp]
 \centering
 \includegraphics[height=0.4\textheight]{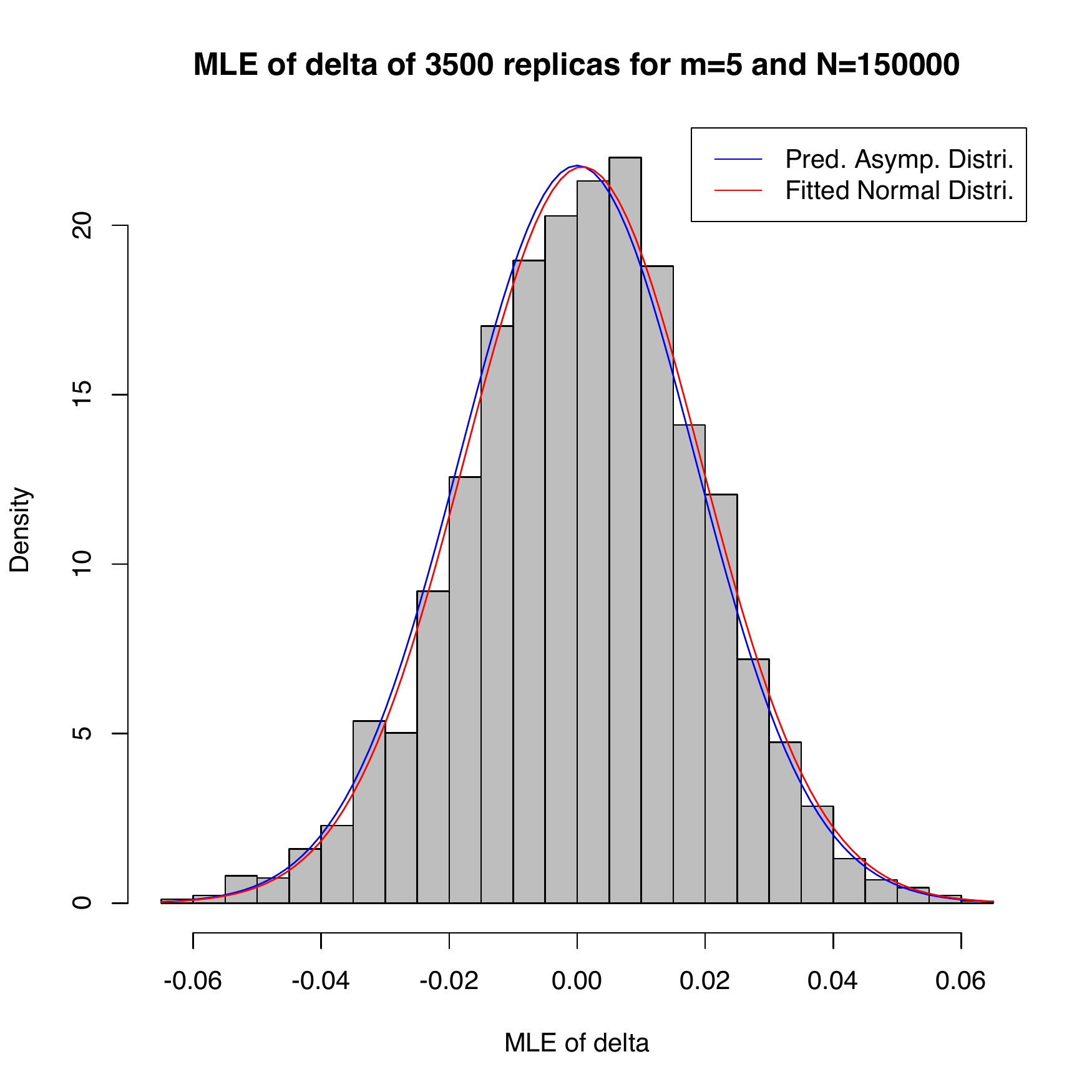}
 \caption{Histogram of 3,500 replicates of the MLE in the Linear Preferential Attachment model with $m=5$ and $n=150,000$.}
 \label{figure:mle-plot}
\end{figure}

\begin{table}[htbp]
    \centering
    \begin{tabular}[http]{ c c c c c c c c }
       Min.\ &  Median &  Mean  &  Max.\ & Samp.\ Var.\ & Pred.\ Var.\ \\
       -0.064580  & 0.0013350 &  0.0008268 & 0.064090 & 0.00033715 & 0.00033594 
    \end{tabular}
\caption{Summary of 3,500 replicates of the MLE in the linear Preferential Attachment model with  $m=5$ and $n = 150,000$.
``Pred.\ Var.'' gives the variance as predicted by the asymptotic distribution of the MLE.}
    \label{table:mle-summary}
\end{table}

\subsection{\textit{The majority rules}}
As we pointed out in the previous section, the MLE offers more information about the limiting degree
distribution than the empirical degree distribution.  We offer a possible explanation why this
happens.  If we only utilize the empirical degree distribution and try to fit a power-law, then 
\textit{essentially} we must restrict ourselves to  nodes with high degrees, and neglect the absolute majority with low
degrees.  However if the universal mechanism of preferential attachment is 
responsible for the evolution of the whole network, then this mechanism is also responsible for the nodes
with low degrees, whence nodes with low degrees also provide information about the underlying
mechanism.  The MLE takes account of all the nodes, and also weighs their relevance in an automated
manner,  and uses more information than present in just nodes of high degrees.  In this sense, the majority (the nodes with low degrees) wins
over the minority (the nodes with high degrees) and thus ``rules''.

\section{Acknowledgements}
The paper is in part inspired by the first author's master's thesis project (\cite{gao2011imdb}), where the modeling of the movie-actor network with the preferential attachment models was studied.  
Though the first author has moved to Shanghai, the work of this paper was almost exclusively conducted during
his PhD research in Leiden.  The authors thank the anonymous referees to their useful comments, which helped significanty in improving the paper. 

\section*{References}
\bibliography{pam}

\begin{thebibliography}{10}
\expandafter\ifx\csname url\endcsname\relax
  \def\url#1{\texttt{#1}}\fi
\expandafter\ifx\csname urlprefix\endcsname\relax\def\urlprefix{URL }\fi
\expandafter\ifx\csname href\endcsname\relax
  \def\href#1#2{#2} \def\path#1{#1}\fi

\bibitem{Barabasi99emergenceScaling}
A.-L. Barab{\'a}si, R.~Albert, Emergence of scaling in random networks, science
  286~(5439) (1999) 509--512.

\bibitem{newman2003structure}
M.~E. Newman, The structure and function of complex networks, SIAM review
  45~(2) (2003) 167--256.

\bibitem{faloutsos1999power}
M.~Faloutsos, P.~Faloutsos, C.~Faloutsos, On power-law relationships of the
  internet topology, in: ACM SIGCOMM computer communication review, Vol.~29,
  ACM, 1999, pp. 251--262.

\bibitem{Perc20140378}
M.~Perc, The matthew effect in empirical data, Journal of The Royal Society
  Interface 11~(98).
\newblock \href {http://dx.doi.org/10.1098/rsif.2014.0378}
  {\path{doi:10.1098/rsif.2014.0378}}.

\bibitem{mori2002random}
T.~M{\'o}ri, On random trees, Studia Scientiarum Mathematicarum Hungarica
  39~(1) (2002) 143--155.

\bibitem{hofstadcomplexnet}
R.~Van Der~Hofstad, Random graphs and complex networks, Vol.~1, Cambridge
  University Press, 2016.

\bibitem{deijfen2009preferential}
M.~Deijfen, H.~van~den Esker, R.~van~der Hofstad, G.~Hooghiemstra, A
  preferential attachment model with random initial degrees, Arkiv f{\"o}r
  matematik 47~(1) (2009) 41--72.

\bibitem{empiestimator}
R.~Castro, F.~Gao, R.~van~der Hofstad, A.~van~der Vaart, On the consistency of
  empirical estimators of general preferential attachment functions.

\bibitem{2015arXiv150407328R}
S.~{Resnick}, G.~{Samorodnitsky}, {Asymptotic Normality of Degree Counts in a
  Preferential Attachment Model}, ArXiv e-prints\href
  {http://arxiv.org/abs/1504.07328} {\path{arXiv:1504.07328}}.

\bibitem{doi:10.1137/070710111}
A.~Clauset, C.~R. Shalizi, M.~E.~J. Newman,
  \href{http://dx.doi.org/10.1137/070710111}{Power-law distributions in
  empirical data}, SIAM Review 51~(4) (2009) 661--703.
\newblock \href {http://arxiv.org/abs/http://dx.doi.org/10.1137/070710111}
  {\path{arXiv:http://dx.doi.org/10.1137/070710111}}, \href
  {http://dx.doi.org/10.1137/070710111} {\path{doi:10.1137/070710111}}.
\newline\urlprefix\url{http://dx.doi.org/10.1137/070710111}

\bibitem{Lehmann}
E.~L. Lehmann, Testing statistical hypotheses, 2nd Edition, Springer Texts in
  Statistics, Springer-Verlag, New York, 1997.

\bibitem{hall2014martingale}
P.~Hall, C.~C. Heyde, Martingale limit theory and its application, Academic
  press, 2014.

\bibitem{pollard}
D.~Pollard, \href{http://dx.doi.org/10.1007/978-1-4612-5254-2}{Convergence of
  stochastic processes}, Springer Series in Statistics, Springer-Verlag, New
  York, 1984.
\newblock \href {http://dx.doi.org/10.1007/978-1-4612-5254-2}
  {\path{doi:10.1007/978-1-4612-5254-2}}.
\newline\urlprefix\url{http://dx.doi.org/10.1007/978-1-4612-5254-2}

\bibitem{vdvaart2000asymptotic}
A.~W. van~der Vaart, Asymptotic statistics (Cambridge series in statistical and
  probabilistic mathematics), Cambridge University Press, 2000.

\bibitem{vdvaartwellner}
A.~W. van~der Vaart, J.~A. Wellner, Weak convergence and empirical processes,
  Springer Series in Statistics, Springer-Verlag, New York, 1996, with
  applications to statistics.

\bibitem{gao2011imdb}
F.~Gao, Modeling and inference of the internet movie database, Master's thesis,
  Eindhoven University of Technology (9 2011).

\end{thebibliography}

\end{document}